\title{CoveringDihedral}
\author{}
\date{\today}
\newcommand{\Aut}{\mathrm{Aut}}
      \title[On Covers of Dihedral 2-Groups by Powerful Subgroups]
            {On Covers of Dihedral 2-Groups by Powerful Subgroups}
\author[R.~Atanasov]{Risto~Atanasov}
\address{Department of Mathematics and Computer Science \\
Stillwell 426 \\
Western Carolina University \\
Cullowhee, NC 28723 USA}
\email{ratanasov@email.wcu.edu}
\author[A.~Gregory]{Adam ~Gregory}
\address{Department of Mathematics and Computer Science \\
Stillwell 426 \\
Western Carolina University \\
Cullowhee, NC 28723 USA}
\email{gregoryadam9@gmail.com}
\author[L.~Guatelli]{Luke Guatelli}
\address{Department of Mathematics and Computer Science \\
Stillwell 426 \\
Western Carolina University \\
Cullowhee, NC 28723 USA}
\email{lrguatelli1@catamount.wcu.edu}
\author[A.~Penland]{Andrew~Penland}
\address{Department of Mathematics and Computer Science \\
Stillwell 426 \\
Western Carolina University \\
Cullowhee, NC 28723 USA}
\email{adpenland@email.wcu.edu}
   \keywords{groups, powerful $p$-groups, group covers }
  \subjclass[2010]{20D15, 20E34}
\numberwithin{equation}{section}
\newtheorem{theorem}[equation]{Theorem}
\newtheorem{lemma}[equation]{Lemma}
\newtheorem{proposition}[equation]{Proposition}
\newtheorem{corollary}[equation]{Corollary}
\theoremstyle{definition}
\newtheorem{defn}[equation]{Definition}
\newtheorem{example}[equation]{Example}
\newtheorem*{remark}{Remark}
\begin{document}

\maketitle

\begin{abstract}
A finite $p$-group $G$ is called \textit{powerful} if either $p$ is odd and $[G,G]\subseteq G^p$ or $p=2$ and $[G,G]\subseteq G^4$. A {\em{cover}} for a group is a collection of subgroups whose union is equal to the entire group. We will discuss covers of $p$-groups by powerful $p$-subgroups. The size of the smallest cover of a $p$-group by powerful $p$-subgroups is called the \textit{powerful covering number}. In this paper we determine the powerful covering number of the dihedral 2-groups.
\end{abstract}

\maketitle

\section{Introduction}
If $G$ is a group, a \textit{cover} of $G$ is a collection of proper subgroups whose union is equal to $G$. A cover of a group $G$ is \textit{minimal} if there is no cover with a smaller number of subgroups. The \textit{covering number} of $G$ is the size of a minimal cover.  
 
Interest in covers and covering numbers goes back to  G.A. Miller, who considered covers by subgroups with pairwise trivial intersection~\cite{Mi}. Such a cover is known as a \textit{partition}. The classification of finite groups admitting a partition is expressed in the following theorem, which combines results primarily due to Baer, Kegel and Suzuki (see ~\cite{Ba, Ke, Su}).
\begin{theorem} [The Classification Theorem]  $G$ is a finite group admitting a nontrivial partition if and only if $G$ is isomorphic to one of the following groups:
\begin{enumerate}
\item $S_4$;
\item  a $p$-group with $H_p(G) \ne G$, where $H_p(G)=\langle  x\in G \mid x^p\ne1\rangle$ ;
\item  a group of Hughes-Thompson type;
\item  a Frobenius group;
\item $PSL(2, p^n)$ with $p^n\ge 4$;
\item $PGL(2, p^n)$ with $p^n\ge 5$  and $p$ odd;
\item $Sz(2^{2n+1})$,
\end{enumerate}
where $p$ is a prime and $n$ is a natural number.\end{theorem}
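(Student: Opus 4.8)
The plan is to recognize at the outset that this statement is not a single argument but the synthesis of three deep papers, so the honest strategy is to peel off the solvable and sporadic families by elementary partition combinatorics and then invoke the hard simple-group analysis. First I would record the basic calculus of a partition $\pi = \{H_1, \dots, H_n\}$: any two distinct components meet trivially, so the nonidentity elements of $G$ are partitioned into the sets $H_i \setminus \{1\}$. Consequently, for any $x \ne 1$ the whole cyclic group $\langle x \rangle$ lies inside the unique component containing $x$, so $\pi$ is completely determined by its effect on elements of prime order, and every component is a trivial-intersection (TI) subgroup. From this I would extract the global leverage: the components of maximal order together with their normalizers control the structure of $G$, and a nontrivial normal subgroup is forced either to lie in a single component or to meet every component.

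Next I would carry out the solvable reductions, which are the content of Baer's and Kegel's work. If $G$ is a $p$-group, the TI property forces the elements of order greater than $p$ to generate a proper subgroup, i.e.\ $H_p(G) \ne G$, and conversely every $p$-group with $H_p(G) \ne G$ admits a partition; this is case (2). If $G$ is not a $p$-group but $H_p(G)$ is proper and nontrivial, the Hughes--Thompson theorem forces $[G : H_p(G)] = p$ and produces case (3). The remaining solvable possibilities are the Frobenius groups, for which the kernel together with the conjugates of a complement is the standard partition (case (4)), and the sporadic exception $S_4$ (case (1)), which is solvable but neither a $p$-group, Frobenius, nor of Hughes--Thompson type and so must be identified by hand. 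The combinatorial heart of this stage is to show that once $G$ falls outside all of these, the induced action of $G$ on the set of conjugates of a maximal component is doubly transitive, with point stabilizers carrying exactly the trivial-intersection structure of a Zassenhaus group; the sharply $2$-transitive subcase is precisely the Frobenius families already dispatched.

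The hardest step, and the one I would lean on Suzuki and Kegel for, is the classification of the resulting almost simple groups. Having reduced to a doubly transitive permutation group whose point stabilizers and involution centralizers are tightly constrained by the TI property of the components, one is exactly in the setting of Zassenhaus groups. Suzuki's study of the centralizers of involutions in such groups — the very analysis that produced the groups $Sz(2^{2n+1})$ — shows that the only simple possibilities are $PSL(2, p^n)$ with $p^n \ge 4$ and $Sz(2^{2n+1})$, while the non-simple, non-solvable case contributes the sharply $3$-transitive groups $PGL(2, p^n)$ with $p$ odd and $p^n \ge 5$, giving cases (5)--(7). I expect this to be the genuine obstacle: it is a pre-CFSG, self-contained but technically formidable argument resting on the theory of doubly transitive groups and fusion of involutions, with no short route available. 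Finally I would check the converse, that each listed group does admit a nontrivial partition; this is routine for $S_4$, for the $p$-group and Hughes--Thompson cases, and for Frobenius groups, and follows from the well-understood subgroup lattices of the $PSL$, $PGL$, and $Sz$ families.
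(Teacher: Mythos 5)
The paper does not actually prove this theorem: it is stated as background in the Introduction, with the proof deferred entirely to the cited literature of Baer, Kegel, and Suzuki, so there is no internal argument to compare against. Your proposal reconstructs precisely that decomposition --- Baer for the $p$-group case (2), Kegel and Hughes--Thompson for case (3), the Frobenius and $S_4$ cases by direct inspection, and Suzuki's Zassenhaus-group analysis for cases (5)--(7) --- and, like the paper, ultimately rests on those same deep external results rather than proving them, so it takes essentially the same approach.
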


Isaacs~\cite{Isaacs} completely classified all finite groups with partitions by subgroups all having the same order. The survey by Zappa~\cite{Zappa} offers many additional results related to partitions and their generalizations. 

Many authors have considered the covering number for various families of finite groups.  Scorza~\cite{Scorza} first studied groups that could be covered by three proper subgroups.  Cohn \cite{Cohn} introduced the notion of a covering number and examined some of its properties. Tomkinson \cite{Tomkinson} posed the question of which integers can be the covering numbers of a group and gave a formula for the covering number of any solvable group.  Bryce, Fedri, and Serena~\cite{BryceFedriSerena} found the covering numbers of many finite linear groups. Lucido~\cite{Lucido} gave the covering numbers of the Suzuki groups. Holmes \cite{Holmes} found exact values and bounds for the covering numbers of sporadic simple groups. Garonzi~\cite{AtMost25} has determined the groups that can be covered by 25 or fewer subgroups. The exceptional groups of Lie type with covering number equal to $n$ were explored by Lucido~\cite{Lucido2}.  There have also been many results on the covering number of symmetric and alternating groups. Mar\'{o}ti \cite{Maroti} considered the asymptotic properties of these covering numbers. Kappe and Redden~\cite{KappeRedden} gave exact values and bounds for the covering numbers of some small alternating and symmetric groups. Swarz~\cite{Swarz} gave exact values for $\sigma(S_n)$ in the case when $n$ is divisible by 6. Kappe, Nikolova-Popova, and Swarz~\cite{KNPS} used a novel application of techniques from integer linear programming to give the exact covering number of some symmetric groups.
The survey paper by Serena \cite{Se} offers a comprehensive survey of known results on group covers at the time it was written.  

Mathematicians have also considered generalizations of group covers and covering numbers, placing additional requirements either on the cover or on the subgroups it contains.  For instance, Bryce and Serena~\cite{BryceSerena} classified the groups that have a minimal cover by abelian subgroups.  Bhargava~\cite{Bhargava} determined the groups with covers consisting only of normal subgroups.  Jabara and Lucido~\cite{JabaraLucido} considered the groups with coverings by Hall subgroups. Garonzi and Lucchini \cite{Lucchini} studied what they called \textit{normal covers} - group covers which are closed under conjugation by any element of the group. They also considered the number of distinct conjugacy classes that could appear in a normal cover.  Foguel and Ragland~\cite{FoguelRagland} investigated covers by abelian groups, all of which are pairwise isomorphic. In~\cite{AFP}, Atanasov, Foguel, and Penland considered covers by subgroups that all have the same order and have mutually isomorphic pairwise intersections. These covers were a generalization of both equal partitions and strict $S$-partitions.

In this paper we study covers of finite $p$-groups by powerful subgroups. If $p$ is a prime number, a \textit{finite $p$-group} is a group of order $p^{\alpha}$ for some nonnegative integer $\alpha$. A finite $p$-group $G$ is called \textit{powerful} if either $p$ is odd and $[G,G]\subseteq G^p$ or $p=2$ and $[G,G]\subseteq G^4$. Powerful groups were first introduced by Lubotzky and Mann~\cite{Lubotzky1}, who later used them to provide a characterization of $p$-adic analytic pro-$p$ groups~\cite{Lubotzky2}.  They  also served an important role in the classification of finite $p$-groups by a property known as \textit{coclass}, introduced by Leedham-Green and Newman in~\cite{Leedham1}(see~\cite{Leedham2} for an overview of the \textit{coclass conjectures} and their proofs). Powerful $p$-groups are also used in the study of automorphisms of $p$-groups~\cite{Khukhro}.

If $G$ is a $p$-group, we define a \textit{powerful cover} of $G$ to be a cover of $G$ by powerful subgroups. We define the \textit{powerful covering number} of $G$ to be the minimal number of subgroups in any powerful cover of $G$. In effect, the powerful covering number gives an idea of how large the powerful subgroups of a given group are.  

Powerful covering numbers do not share all of the same properties as ordinary covering numbers. We will explore some of these differences in Section~\ref{s:powerful-covering-properties}, where we prove some results on how powerful covering numbers behave with respect to homomorphic images and direct products. In Section~\ref{s:dihedral-powerful-covering-numbers}, we explicitly calculate the powerful covering numbers of the dihedral $2$-groups. We conclude with a discussion of conjectures and questions for future work. 

\section{Background} 

In the interest of making this paper self-contained and accessible to non-experts, we will make very few assumptions about the reader's knowledge of group theory. This section reviews most definitions and well-known facts necessary to establish our results. Most of this material is standard and can be found in texts such as~\cite{Gallian} or~\cite{Dummit-Foote}.  All groups in this paper are assumed to be finite. 

\subsection{Group Theory}

Let $G$ be a group. We write $e$ for the identity element of $G$. If $S$ is a subset of $G$, the \textit{group generated by $S$} is the smallest subgroup of $G$ that contains $S$ and is denoted $\langle S \rangle$. If $T$ is a set, we write $|T|$ for the cardinality of $T$. A subgroup generated by a single element $g \in G$ is denoted $\langle g \rangle$ in an abuse of notation. Such a subgroup is called \textit{cyclic}. For $g \in G$, the \textit{order} of $g$ is defined to be $|\langle g \rangle|$. An abelian group is called \textit{elementary abelian} if all nontrivial elements have order $p$.

For $g,h \in G$, the \textit{commutator of $g$ and $h$} is defined as $g^{-1}h^{-1}gh$ and denoted $[g,h]$. If $H$ and $K$ are subgroups of a group $G$, the group $[H,K]$, is the group generated by the set $\{ [h,k] \mid h \in H, k \in K \}$. For a positive integer $k$, the subgroup $G^k$ is the group generated by the set $\{ g^k \mid g \in G \}$. For a group $G$, the \textit{lower central series} is defined as $G_0 = G$, $G_i = [G_{i-1}, G]$ for $i \geq 1$. If there exists an $i$ such that $G_i$ is trivial, then $G$ is \textit{nilpotent}, and the \textit{nilpotence class of G} is defined as the least $i$ such that $G_i$ is trivial. 

A subgroup $N$ of a group $G$ is \textit{normal} if the group $\{ g^{-1} n g \mid n \in N, g \in G \}$ is equal to $N$. The \textit{normal closure} of a subgroup $H \subseteq G$ is the smallest normal subgroup of $G$ that contains $H$. We write $H^G$ for the normal closure of $H$ in $G$. It is not hard to see that a group is normal if and only if it is equal to its normal closure. 

\begin{proposition}\label{p:normal-closure-facts}
Let $G$ be a group. 
\begin{itemize}
    \item[(i.)] If $X$ is a generating set for $G$. Then $[G,G] = \langle \{ [x,y] \mid x,y \in X \} \rangle^G$.
    \item[(ii.)] If $H$ and $K$ are subgroups of $G$ with $H \leq K$, then $H^G \leq K^G$.
    \item[(iii.)] For any positive integer $k$, $G^k$ is a normal subgroup of $G$
    \end{itemize}
\end{proposition}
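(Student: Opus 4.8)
The plan is to treat the three parts separately, establishing (ii) and (iii) first since they are short and part (i) will invoke (ii). For (ii), I would appeal directly to the defining minimality property of the normal closure: since $K \le K^G$ and $K^G$ is normal in $G$, and since $H \le K$, the subgroup $K^G$ is a normal subgroup of $G$ containing $H$; because $H^G$ is by definition the smallest normal subgroup of $G$ containing $H$, it follows immediately that $H^G \le K^G$. For (iii), I would show that $G^k = \langle \{ g^k \mid g \in G \} \rangle$ is invariant under conjugation, and it suffices to check this on its generating set. For any $h \in G$ we have $h^{-1} g^k h = (h^{-1} g h)^k$, which is again a $k$-th power and hence lies in $G^k$; thus conjugation by $h$ carries each generator of $G^k$ to another generator, so it maps $G^k$ into itself, and doing this for every $h$ shows $G^k$ is normal.

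For (i), I would write $N = \langle \{ [x,y] \mid x,y \in X \} \rangle^G$, which is normal in $G$ by construction. The containment $N \le [G,G]$ is the easy direction: each generator $[x,y]$ lies in $[G,G]$, so $\langle \{ [x,y] \mid x,y \in X \} \rangle \le [G,G]$; since the commutator subgroup is normal we have $[G,G]^G = [G,G]$, and part (ii) then gives $N \le [G,G]^G = [G,G]$. For the reverse containment I would pass to the quotient $G/N$, which is legitimate because $N$ is normal, and show it is abelian. The images of the elements of $X$ generate $G/N$, and for any $x,y \in X$ the relation $[x,y] \in N$ means that $xN$ and $yN$ commute in $G/N$. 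Since a group generated by a set of pairwise-commuting elements is abelian, $G/N$ is abelian, which forces $[G,G] \le N$. Combining the two containments yields $[G,G] = N$.

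The main obstacle is precisely the reverse containment in (i). The naive temptation is to claim that $[G,G]$ is generated by the commutators $[x,y]$ with $x,y \in X$ directly, but this is false in general: one must also allow conjugates and products of such commutators, which is exactly the reason the normal closure appears in the statement. The quotient argument avoids confronting this generating-set issue head-on, reducing the whole claim to the single clean observation that pairwise-commuting generators force the quotient to be abelian. Everything else is routine once the roles of normality and the universal property of the normal closure are kept straight.
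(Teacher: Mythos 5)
Your proof is correct in all three parts: the appeal to the minimality property of the normal closure for (ii), the conjugation-of-generators argument for (iii), and the two-containment argument for (i) via showing the quotient $G/N$ is abelian are all sound and complete. The paper itself gives no proof of this proposition, presenting it as standard background material from texts such as \cite{Dummit-Foote}, and your argument is exactly the standard one that fills that gap.
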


The \textit{center} of a group $G$, denoted $Z(G)$, is the set of elements $g \in G$ such that $[g,h] = e$ for all $h \in G$. 

A subgroup $K$ of $G$ is \textit{maximal} if whenever there exists a subgroup $H \subseteq G$ such that $K$ is a proper subgroup of $H$, it follows that $H = G$. If $H$ is any subgroup of $G$, the \textit{index} of $H$ in $G$ is equal to $\displaystyle\frac{|G|}{|H|}.$ For a $p$-group, it is known that all maximal subgroups have index $p$. If $G$ and $T$ are groups, a \textit{group homomorphism} is a map $\alpha: G \rightarrow T$ such that $\alpha(gh) = \alpha(g)\alpha(h)$ for all $g, h \in G$. The \textit{kernel} of a group homomorphism $\alpha: G \rightarrow T$ is $\text{ker }\alpha=\{g\in G|\alpha(g)=e_T\}$, where $e_T$ is the identity in $T$. Since $G$ is finite, the \textit{index} of a subgroup $H$ of $G$ is defined to be the number $\frac{|G|}{|H|}$. If $\alpha$ is a group homomorphism, then by The First Isomorphism Theorem the index of $\text{ker } \alpha$ in $G$ is equal to the cardinality of the image of $\alpha$. If $G$ and $T$ are groups and $\alpha : G \rightarrow T$ is a bijective group homomorphism, then $\alpha$ is an \textit{isomorphism} and we say that $G$ and $T$ are \textit{isomorphic} and write $G \cong T$. If $\alpha$ is an isomorphism from a group $G$ into itself, then $\alpha$ is an \textit{automorphism} of $G$. The set of all automorphisms of a group $G$ form a group under function composition, denoted by Aut$(G)$. 

A \textit{left group action} of a group $G$ on a nonempty set $A$ is a map from $G \times A$ to $A$, denoted by $g \cdot a$ for all $a \in A$ and $g \in G$, such that $g_1 \cdot (g_2 \cdot a) = (g_1 g_2) \cdot a$ and $e \cdot a = a$ for all $g_1, g_2 \in G$ and $a \in A$. We can define a right action similarly. 

If $G$ and $H$ are groups, the \textit{direct product} $G \times H$ is a group with the set $\{ (g,h) \mid g \in G, h \in H \} $ and group operation given componentwise. If $G$ and $H$ are groups with a group homomorphism $\alpha : H \rightarrow \;  \text{Aut}(G)$, the \textit{semidirect product} of $G$ and $H$, denoted $G \rtimes_\alpha H$, is the set of ordered pairs $(g,h)$ with $g \in G$ and $h \in H$ with group operation defined $(g_1, h_1)(g_2, h_2) = (g_1 g_2^{h_1}, h_1 h_2)$, where $g_2^{h_1}$ is defined as $\alpha(h_1)(g_2)$.

\subsection{Cyclic, Dihedral, and other Finite Groups}

If $t$ is a natural number, a \textit{cyclic group of order t} is the group generated by a single element of order $t$. We write $C_t$ for the cyclic group of order $t$ generated by an element $z$ of order $t$. We let $\tau$ represent the generator of $C_2$. 

For $n \geq 2$, we define $D(2^n)$ to be the set of isometries of a regular $2^n$-gon. The group $D(2^n)$ has $2^{n+1}$ elements. 

Several facts about the elements of the dihedral groups are well-known from Euclidean geometry, see e.g.~\cite[Section 2.2]{FiniteReflection} or ~\cite[Section 3.3]{CarneGeometry}. 

\begin{theorem}
Let $D(2^n)$ be the group of isometries of a regular $2^n$-gon. The following hold: 
\begin{enumerate}
    \item[(i.)] Every element of $D(2^n)$ is either a reflection or a rotation.
    \item[(ii.)] The composition of two reflections gives a rotation.
    \item[(iii.)] The composition of a reflection and a rotation is a reflection. 
\end{enumerate}
\end{theorem}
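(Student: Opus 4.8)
The plan is to realize $D(2^n)$ concretely as a subgroup of the orthogonal group $O(2)$ and then read off all three statements from the multiplicativity of the determinant. First I would place the regular $2^n$-gon with its center at the origin. The key preliminary observation is that every isometry $f$ of the plane that maps the polygon onto itself must permute the vertices, and hence must fix the centroid of the vertex set; since this centroid is the center of the polygon, $f$ fixes the origin. A standard fact from Euclidean geometry is that an isometry of $\mathbb{R}^2$ fixing the origin is a linear map given by an orthogonal matrix, so the symmetry group $D(2^n)$ embeds as a subgroup of $O(2)$.

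Next I would classify the elements of $O(2)$. Writing an orthogonal transformation as a $2 \times 2$ matrix $A$ with $A^\top A = I$, one checks that $A$ has exactly one of two forms: a rotation matrix $\begin{pmatrix} \cos\theta & -\sin\theta \\ \sin\theta & \cos\theta \end{pmatrix}$, with $\det A = 1$, or a reflection matrix $\begin{pmatrix} \cos\theta & \sin\theta \\ \sin\theta & -\cos\theta \end{pmatrix}$, with $\det A = -1$. The matrices of the first kind fix the origin and rotate the plane through the angle $\theta$, while those of the second kind fix a line through the origin. This dichotomy is exactly statement (i): every element of $D(2^n) \subseteq O(2)$ is a rotation or a reflection, and the two types are distinguished by whether the determinant equals $+1$ or $-1$.

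With this in hand, statements (ii) and (iii) are immediate from the fact that $\det : O(2) \to \{ +1, -1 \}$ is a group homomorphism. The composition of two reflections has determinant $(-1)(-1) = +1$, so it is a rotation, giving (ii); the composition of a reflection and a rotation has determinant $(-1)(+1) = -1$, so it is a reflection, giving (iii).

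The one step that requires genuine care, rather than routine computation, is the preliminary reduction: justifying that every symmetry of the polygon fixes the center and is therefore realized by an orthogonal matrix. Once the problem is transported into $O(2)$, everything collapses to bookkeeping on the sign of a determinant. I would therefore devote most of the write-up to the centroid-fixing argument and to establishing the explicit normal form for orthogonal $2 \times 2$ matrices, and treat (ii) and (iii) as one-line consequences of multiplicativity.
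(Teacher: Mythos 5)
Your proof is correct, but it is worth knowing that the paper does not actually prove this theorem at all: it records the three statements as facts well known from Euclidean geometry and simply refers the reader to \cite[Section 2.2]{FiniteReflection} and \cite[Section 3.3]{CarneGeometry}. Your argument therefore supplies something the paper deliberately omits, namely a self-contained derivation, and the route you take --- fix the centroid, realize the symmetry group inside $O(2)$, classify orthogonal $2 \times 2$ matrices into the determinant-$(+1)$ rotation form and the determinant-$(-1)$ reflection form, and then read (ii) and (iii) off the homomorphism $\det \colon O(2) \to \{\pm 1\}$ --- is the standard linear-algebra treatment, essentially what the cited sources do. You are also right that the centroid-fixing reduction is the only step needing genuine care: an isometry carrying the polygon to itself permutes the vertices, hence fixes their centroid, and an isometry of $\mathbb{R}^2$ fixing the origin is linear and orthogonal. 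One small point to make explicit in a full write-up: in (ii), if the two reflections coincide, their composition is the identity, so the statement implicitly counts the identity as the trivial rotation; your classification handles this automatically, since the identity is the rotation matrix with $\theta = 0$. What your approach buys is self-containedness, in line with the paper's stated aim of accessibility to non-experts; what the paper's citation buys is brevity, since these geometric facts play only a background role in its study of powerful covers.
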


\begin{defn}
We say a group is a  \textit{dihedral $2$-group} if it is isomorphic to $D(2^n)$ for some $n \geq 2$ or if it is isomorphic to $C_2 \times C_2$. \\
\end{defn}

The following two facts about the generators of dihedral $2$-groups can be found in~\cite{FiniteReflection}.

\begin{theorem}\label{l:order-two-dihedral}
If $G$ is a $2$-group generated by two elements of order two, then $G$ is isomorphic to a dihedral $2$-group. 
\end{theorem}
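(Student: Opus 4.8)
The plan is to reduce the hypothesis to the standard dihedral relation by passing to the product of the two generators. Write $a$ and $b$ for the given generators, each of order two, and set $r = ab$. Since $a^{-1} = a$ and $b^{-1} = b$, a one-line computation gives $a r a^{-1} = a(ab)a = ba = (ab)^{-1} = r^{-1}$, and similarly $b r b^{-1} = r^{-1}$; the essential fact is that conjugation by $a$ inverts $r$. Because $b = ar$, we have $G = \langle a, b \rangle = \langle a, r \rangle$, so $G$ is generated by the involution $a$ together with $r$ subject to $a r a^{-1} = r^{-1}$. Throughout I assume $a \neq b$ (if $a = b$ then $G \cong C_2$, a degenerate case outside the stated definition).

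I would then study the cyclic subgroup $H = \langle r \rangle$. Conjugation by $a$ sends $r$ to $r^{-1} \in H$, so $a$ normalizes $H$; as $r \in H$ normalizes $H$ and $G = \langle a, r \rangle$, the subgroup $H$ is normal in $G$. The quotient $G/H$ is generated by the image of $a$, whose square is trivial, so $|G : H| \leq 2$. Since $G$ is a $2$-group, $|r| = 2^n$ for some $n$, and $a \neq b$ forces $r = ab \neq e$, hence $n \geq 1$. If $a \in H$, then $G = H$ is cyclic and so has a unique involution, contradicting $a \neq b$; therefore $a \notin H$, $|G : H| = 2$, and $|G| = 2 \cdot 2^n = 2^{n+1}$. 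Listing cosets shows $G = \{\, r^i,\, a r^i : 0 \leq i < 2^n \,\}$ with all $2^{n+1}$ elements distinct.

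It remains to match $G$ with one of the groups named in the definition of a dihedral $2$-group. When $n = 1$, the element $r$ is an involution, so $r^{-1} = r$ and the relation $a r a^{-1} = r^{-1}$ says $a$ and $r$ commute; thus $G$ is an abelian group of order four generated by two commuting involutions, i.e. $G \cong C_2 \times C_2$. When $n \geq 2$, I would exhibit an explicit map onto $D(2^n)$: taking $\rho$ a rotation of the regular $2^n$-gon of order $2^n$ and $\sigma$ any reflection, define $r^i \mapsto \rho^i$ and $a r^i \mapsto \sigma \rho^i$. This is a bijection between two sets of size $2^{n+1}$, and the relations $\sigma \rho \sigma^{-1} = \rho^{-1}$ and $a r a^{-1} = r^{-1}$ (together with the facts recorded in the theorem on reflections and rotations above) make it multiplicative, so it is an isomorphism $G \cong D(2^n)$.

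The main obstacle is not the commutator computation, which is immediate, but the bookkeeping needed to promote the abstract relation $a r a^{-1} = r^{-1}$ to a genuine isomorphism with the geometric group $D(2^n)$. Concretely, I must rule out any further collapse so that $|G| = 2^{n+1}$ exactly (this is precisely what the index computation for $H$ supplies), and I must verify the homomorphism property of the proposed map on all products $a r^i \cdot a r^j$, using the relation to commute $a$ past powers of $r$. Additional care is required at the boundary value $n = 1$, which yields $C_2 \times C_2$ rather than a $D(2^n)$, and in excluding the degenerate case $a = b$.
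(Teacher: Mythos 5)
Your proof is correct, but note that the paper does not actually prove this theorem: its entire ``proof'' is a citation to Gallian (Theorem 26.5). So your argument is not so much a different route as the only genuine proof on the table, and it is the standard one that the cited textbook result encapsulates: pass to $r = ab$, observe the inverting relation $ara^{-1} = r^{-1}$, show $H = \langle r \rangle$ is normal of index exactly $2$ (index exactly $2$ because a cyclic $2$-group has a unique involution, which would contradict $a \neq b$), and then match the resulting normal form $\{r^i, ar^i\}$ with the geometric group $D(2^n)$ via an explicit bijection whose multiplicativity follows from the relation. Your handling of the boundary case $|r| = 2$, giving $C_2 \times C_2$, is exactly what the paper's definition of dihedral $2$-group requires, since that definition explicitly includes $C_2 \times C_2$ alongside the groups $D(2^n)$, $n \geq 2$. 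The self-contained argument buys the paper independence from an external reference (in keeping with its stated aim of accessibility to non-experts), and it also surfaces a point the paper's statement glosses over: if the two generating involutions coincide, then $G \cong C_2$, which is \emph{not} a dihedral $2$-group under the paper's definition, so the hypothesis must be read as two distinct elements --- a caveat you correctly isolate. The only place where your write-up leaves routine work unfinished is the verification that $r^i \mapsto \rho^i$, $ar^i \mapsto \sigma\rho^i$ is multiplicative on all four types of products ($r^i r^j$, $r^i \cdot ar^j$, $ar^i \cdot r^j$, $ar^i \cdot ar^j$); this is exponent bookkeeping with $ar^ia^{-1} = r^{-i}$ and the corresponding geometric relation $\sigma\rho^i\sigma^{-1} = \rho^{-i}$, exactly as you indicate, and it goes through without obstruction because you have already pinned down $|G| = 2^{n+1}$ so that the map is a well-defined bijection.
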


\begin{proof}
See~\cite[Theorem 26.5]{Gallian}.
\end{proof}

\begin{corollary}\label{c:C2-C2-fact}
If $G$ is an abelian $2$-group generated by two elements of order two, then $G$ is isomorphic to $C_2 \times C_2$.
\end{corollary}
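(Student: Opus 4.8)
The plan is to read off the conclusion from Theorem~\ref{l:order-two-dihedral} and then discard the non-abelian possibilities. Since $G$ is a $2$-group generated by two elements of order two, Theorem~\ref{l:order-two-dihedral} tells us at once that $G$ is isomorphic to a dihedral $2$-group; by the definition just given, $G$ is therefore isomorphic either to $D(2^n)$ for some $n \geq 2$ or to $C_2 \times C_2$. Because $C_2 \times C_2$ is the only abelian group appearing on this list, it suffices to check that every $D(2^n)$ with $n \geq 2$ is non-abelian, and the hypothesis that $G$ is abelian then forces $G \cong C_2 \times C_2$.

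To see that $D(2^n)$ is non-abelian for $n \geq 2$, I would work directly from facts (i)--(iii) on reflections and rotations. Let $\rho$ be the rotation through angle $2\pi/2^n$; its order is $2^n \geq 4$, so $\rho^2 \neq e$. Fix any reflection $s$, which has order two. By fact (iii) the element $s\rho$ is again a reflection, so $(s\rho)^2 = e$, and expanding this gives $s\rho s = \rho^{-1}$. If $D(2^n)$ were abelian we could instead compute $s\rho s = \rho s^2 = \rho$, whence $\rho = \rho^{-1}$, contradicting $\rho^2 \neq e$. Hence $D(2^n)$ is non-abelian whenever $n \geq 2$, completing the argument.

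The single point requiring care is exactly this non-commutativity, since the excerpt records the compositional facts (i)--(iii) but not the dihedral relation $s\rho s^{-1} = \rho^{-1}$ outright; I expect it to be the only real obstacle, and the computation above supplies the missing relation from fact (iii) alone. As an independent check one can avoid Theorem~\ref{l:order-two-dihedral} altogether: writing $a, b$ for the two (distinct) generating involutions, commutativity makes every product collapse to the form $a^i b^j$ with $i, j \in \{0,1\}$, so $G = \{e, a, b, ab\}$ has order four with every nontrivial element of order two, which is precisely the Klein four-group $C_2 \times C_2$. I would present the proof through Theorem~\ref{l:order-two-dihedral} to match the surrounding development and keep this direct computation in reserve.
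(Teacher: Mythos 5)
Your proposal is correct and follows the same route the paper intends: the corollary is stated immediately after Theorem~\ref{l:order-two-dihedral} precisely because it follows by applying that theorem and discarding the non-abelian groups $D(2^n)$, $n \geq 2$, which is exactly your first paragraph. Your verification that $D(2^n)$ is non-abelian (via $s\rho s = \rho^{-1}$ from the reflection facts) and your reserve direct argument are both sound, just more detail than the paper records.
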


We write $S_n$ for the symmetric group of all permutations on $n$ elements, and $A_n$ for the subgroup of $S_n$ consisting of even permutations. 

\subsection{Powerful Groups}

As we stated in the Introduction, a finite $p$-group $G$ is called \textit{powerful} if either $p$ is odd and $[G,G]\subseteq G^p$ or $p=2$ and $[G,G]\subseteq G^4$. If a $p$-group $G$ is abelian, then $G$ is powerful since $[G,G]$ is trivial. As noted in~\cite{Lubotzky1}, the homomorphic image of a powerful group is powerful, and the direct product of two powerful groups is powerful. 

To find examples of nonabelian powerful groups, we used the computer algebra system \texttt{GAP} (Groups, Algorithms, and Programming) \cite{GAP}. Here are a few examples of nonabelian powerful groups. 

\begin{example}
Let $C_8$ be the cyclic group of order 8, generated by an element $z$. Let $\phi: C_2 \rightarrow \Aut(C_8)$ be a homomorphism which sends $\tau$ to the automorphism of $C_8$ sending $z$ to $z^5$. Let $G$ be equal to the semidirect product $C_8 \rtimes_{\phi} C_2$. By an abuse of notation, we will write $\tau$ for $(e,\tau)$ and $z$ for $(z,e)$. Notice that $G$ is generated by $\tau$ and $z$. We will show that $[G,G] \subseteq G^4$.  Since  $z^{\tau} = z^5$ and $z^{-1} = z^7$, we calculate 
\begin{align*}
[\tau, z] &= (\tau \tau^{-1}, z^{\tau} z^7) \\
&= (e, z^5z^7) \\
&= (e, z^{12}) \\
&= (e, z^3)^4.
\end{align*}
It follows that $[\tau, z] \in G^4$. Hence $ \langle [z, \tau] \rangle \subseteq G^4$, so $[G,G] \subseteq G^4$ by Proposition~\ref{p:normal-closure-facts}. 
\end{example}

\begin{example}
The previous example can be generalized. Let $C_{2^n}$ be a cyclic group of order $2^n$ generated by an element $z$ of order $2^n$, and let $\phi: C_2 \rightarrow \Aut(C_{2^n})$ be the homomorphism that sends $\tau$ to the automorphism $z \mapsto z^{2^{n-1} + 1}$. By an identical calculation as the previous example, we see that $$[z, \tau] = (z^{2^{n-1} + 2^n}, e) = ((z^{2^{n-3} + 2^{n-2}})^4, e),$$ establishing that $[G,G] \subseteq G^4$. It follows that $C_{2^n} \rtimes_{\phi} C_2$ is powerful for all $n \geq 3$.
\end{example}

\subsection{Covers and Covering Numbers}

A \textit{cover} of a $p$-group $G$ is a collection of proper subgroups whose union is equal to $G$. A cover of a group $G$ is \textit{minimal} if there is no cover with a smaller number of subgroups. The \textit{covering number} of $G$ is the size of a minimal cover. Following Cohn~\cite{Cohn}, we write $\sigma(G)$ for the covering number of $G$. Here we note some known facts about the covering number. 

\begin{proposition}[Lemma 2, ~\cite{Cohn}]\label{t:homomorphic-cover-numbers}
Let $G$ be a noncyclic group. If $H$ is a homomorphic image of $G$, then $\sigma(G) \leq \sigma(H).$
\end{proposition}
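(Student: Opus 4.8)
The plan is to pull back a minimal cover of $H$ along a surjection $G \to H$ and check that the resulting preimages form a cover of $G$ of no larger size. Since $H$ is a homomorphic image of $G$, fix a surjective homomorphism $\alpha : G \to H$. Before the main argument I would dispose of a degenerate case: if $H$ is cyclic, then $H$ has no cover by proper subgroups (a generator of $H$ lies in no proper subgroup), so under the standard convention $\sigma(H) = \infty$ and the inequality is immediate. I would also note that because $G$ is noncyclic, $G$ is the union of its proper cyclic subgroups $\langle g \rangle$ over $g \in G$, so $\sigma(G)$ is finite and the statement is meaningful.

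Assuming now that $H$ is noncyclic, fix a minimal cover $\{H_1, \dots, H_k\}$ of $H$ with $k = \sigma(H)$. I would set $K_i = \alpha^{-1}(H_i)$ for each $i$ and claim that $\{K_1, \dots, K_k\}$ is a cover of $G$. Each $K_i$ is a subgroup, being the preimage of a subgroup under a homomorphism. To see that the $K_i$ cover $G$, take any $g \in G$: since the $H_i$ cover $H$, we have $\alpha(g) \in H_j$ for some $j$, and hence $g \in K_j$.

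The one point that requires care is that each $K_i$ is a \emph{proper} subgroup of $G$, so that the $K_i$ genuinely form a cover in the sense used here. This is where surjectivity of $\alpha$ is essential: if $K_i = G$, then $H = \alpha(G) = \alpha(K_i) \subseteq H_i$, forcing $H_i = H$ and contradicting that $H_i$ is proper. The same surjectivity gives $\alpha(K_i) = H_i$, so distinct $H_i$ yield distinct $K_i$; thus $\{K_1, \dots, K_k\}$ has exactly $k$ members. Therefore $\sigma(G) \le k = \sigma(H)$, as desired.

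The structural content here is entirely routine — preimages of a cover form a cover — so the only real obstacle is bookkeeping: ensuring $\alpha$ is chosen surjective, verifying properness of the preimages, and correctly handling the cyclic edge case via the $\sigma = \infty$ convention. No step should present a genuine difficulty beyond these checks.
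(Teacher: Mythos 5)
Your proof is correct, and it is essentially the canonical argument for this fact: the paper itself supplies no proof of Proposition~\ref{t:homomorphic-cover-numbers}, simply quoting it as Lemma~2 of \cite{Cohn}, and your pullback argument (preimages of a minimal cover of $H$ under a surjection $\alpha: G \rightarrow H$ are proper subgroups covering $G$) is exactly the standard proof of that lemma. Your handling of the edge case where the image is cyclic, via the convention $\sigma = \infty$, and your use of surjectivity to verify properness of each $\alpha^{-1}(H_i)$ are both correct and are the only points requiring care.
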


\begin{corollary}[in ~\cite{Cohn}]\label{c:direct-product-covering-number}
If $H$ and $K$ are non-cyclic groups, then $\sigma(H \times K) \leq \min(\sigma(H), \sigma(K)).$ 
\end{corollary}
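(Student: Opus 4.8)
The plan is to deduce this directly from Proposition~\ref{t:homomorphic-cover-numbers} by exploiting the canonical projection homomorphisms from the direct product onto its factors. Specifically, I would set $G = H \times K$ and consider the two coordinate projections $\pi_H \colon H \times K \to H$ and $\pi_K \colon H \times K \to K$, defined by $\pi_H(h,k) = h$ and $\pi_K(h,k) = k$. Each of these is a surjective group homomorphism, so both $H$ and $K$ are realized as homomorphic images of $G = H \times K$. This is exactly the hypothesis needed to feed into the preceding proposition.

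Before invoking Proposition~\ref{t:homomorphic-cover-numbers}, I would verify that $G = H \times K$ is itself noncyclic, since that proposition requires its domain group to be noncyclic (a cyclic group admits no cover by proper subgroups, as its generator lies in no proper subgroup). This is immediate: any homomorphic image of a cyclic group is cyclic, so if $H \times K$ were cyclic then its image $H$ under $\pi_H$ would be cyclic, contradicting the hypothesis that $H$ is noncyclic. Hence $H \times K$ is noncyclic and $\sigma(H \times K)$ is well defined.

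With the noncyclicity confirmed, I would apply Proposition~\ref{t:homomorphic-cover-numbers} twice to $G = H \times K$: using the homomorphic image $H$ gives $\sigma(H \times K) \leq \sigma(H)$, and using the homomorphic image $K$ gives $\sigma(H \times K) \leq \sigma(K)$. Taking the two inequalities together yields $\sigma(H \times K) \leq \min(\sigma(H), \sigma(K))$, which is the desired conclusion. I do not anticipate any substantive obstacle here; the only point requiring care is the verification that the product is noncyclic so that the ambient proposition legitimately applies, and this follows from the noncyclicity of the factors as noted above.
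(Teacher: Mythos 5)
Your proof is correct and matches the intended derivation: the paper states this result without proof as a corollary of Proposition~\ref{t:homomorphic-cover-numbers} (citing Cohn), and applying that proposition to the two coordinate projections is exactly the argument being implied. Your extra check that $H \times K$ is noncyclic (since a cyclic group's homomorphic images are cyclic) is a worthwhile detail that the paper leaves implicit.
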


\begin{theorem}[Theorem 2, ~\cite{Cohn}]\label{t:covering-number-p-groups}
If $G$ is a noncyclic $p$-group, then $\sigma(G) = p+1$.
\end{theorem}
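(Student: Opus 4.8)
The plan is to prove the two inequalities $\sigma(G) \le p+1$ and $\sigma(G) \ge p+1$ separately. The upper bound I would obtain by reducing to the elementary abelian group $C_p \times C_p$ and invoking Proposition~\ref{t:homomorphic-cover-numbers}, while the lower bound I would obtain by a direct counting argument on maximal subgroups. Since all groups here are finite, all the counting is unproblematic.

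For the upper bound, I would first observe that a noncyclic $p$-group admits $C_p \times C_p$ as a homomorphic image. Indeed, the Frattini quotient $G/\Phi(G)$ is elementary abelian, and by the Burnside basis theorem its rank equals the minimal number of generators of $G$; since $G$ is noncyclic this rank is at least $2$, so $G/\Phi(G)$, and hence $G$, surjects onto $C_p \times C_p$. By Proposition~\ref{t:homomorphic-cover-numbers} it then suffices to show $\sigma(C_p \times C_p) = p+1$. Regarding $C_p \times C_p$ as a two-dimensional vector space over the field with $p$ elements, its only proper nontrivial subgroups are the $p+1$ one-dimensional subspaces, any two of which meet only in the identity. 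These $p+1$ subgroups partition the $p^2-1$ nonidentity elements into blocks of size $p-1$, so together they cover the whole group while no proper subcollection can. This gives $\sigma(C_p \times C_p) = p+1$ and therefore $\sigma(G) \le p+1$.

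For the lower bound I would first reduce to covers by maximal subgroups: replacing each member of a cover by a maximal subgroup containing it produces a cover of no larger size, and minimality forces the enlarged members to remain distinct, so a minimal cover may be assumed to consist of maximal subgroups, each of index $p$ since $G$ is a $p$-group. Fixing one member $M$ of such a cover, the complement $G \setminus M$ has $|G|(p-1)/p$ elements, all of which must be covered by the remaining members. For any other maximal subgroup $M'$ in the cover, $M$ maximal and $M' \not\subseteq M$ give $MM' = G$, whence $|M \cap M'| = |G|/p^2$ and $M'$ meets $G \setminus M$ in only $|M' \setminus M| = |G|(p-1)/p^2$ elements. Comparing the total $|G|(p-1)/p$ with the per-subgroup contribution $|G|(p-1)/p^2$ forces at least $p$ further subgroups, so the cover has at least $p+1$ members and $\sigma(G) \ge p+1$.

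I expect the lower bound to be the crux: the estimate must be uniform over all possible minimal covers, which is exactly why the reduction to index-$p$ maximal subgroups is essential — without it, smaller proper subgroups would complicate the counting and the clean identity $|M \cap M'| = |G|/p^2$ would fail. The only ingredient external to the excerpt is the Burnside basis theorem used in the upper bound, which I would simply cite as a standard fact; everything else follows from Proposition~\ref{t:homomorphic-cover-numbers} together with the elementary index computations above.
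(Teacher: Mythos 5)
Your proposal is correct, but note that the paper never proves this statement at all: it is quoted directly from Cohn~\cite{Cohn} (his Theorem~2) and used as a black box, so any complete argument is necessarily a departure from the paper. Your proof is sound and self-contained modulo one standard fact. The upper bound via the Frattini quotient is fine: $G/\Phi(G)$ elementary abelian of rank at least $2$ surjects onto $C_p \times C_p$, whose covering number is $p+1$ because its only proper nontrivial subgroups are the $p+1$ lines of $\mathbb{F}_p^2$, and Proposition~\ref{t:homomorphic-cover-numbers} transfers the bound to $G$. The lower bound via counting against a cover by maximal subgroups is also correct, but your step ``$M$ maximal and $M' \not\subseteq M$ give $MM' = G$'' tacitly assumes $MM'$ is a subgroup, which requires the (standard, but nowhere stated in the paper) fact that maximal subgroups of a finite $p$-group are normal; the paper only records that they have index $p$. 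You can either cite that fact, or sidestep it entirely: the set-theoretic identity $|MM'| = |M|\,|M'|/|M \cap M'|$ together with $MM' \subseteq G$ already forces $|M \cap M'| \geq |G|/p^2$, hence $|M' \setminus M| \leq |G|(p-1)/p^2$, which is the only estimate your counting uses. With that patch the argument is complete, and it has the merit of making explicit the two-sided structure (quotient reduction for the upper bound, maximal-subgroup counting for the lower bound) that the paper delegates to the literature.
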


In~\cite{BryceSerena}, Bryce and Serena give a characterization of groups with minimal covers by abelian subgroups. In order to state the result, we need a few definitions. A group $H$ is \textit{monolithic} if it has a unique nontrivial normal subroup that is contained in every other nontrivial normal subgroup; this unique normal subgroup is called the \textit{monolith} of $H$. A subgroup $K$ of a group $H$ is \textit{central} if it is contained in $Z(H)$. If $K$ is a subgroup of a group $H$, a \textit{complement} of $K$ is a subgroup $L \leq H$ such that $L \cap K$ is trivial and $H = LK$. 

\begin{theorem}[~\cite{BryceSerena}]\label{t:minimal-covers-by-abelian}
Let $G$ be a nonabelian group. Then $G$ has a minimal cover consisting of only abelian groups if and only if the factor group $G/Z(G)$ is either
\begin{enumerate}
    \item monolithic, with a non-central, elementary abelian monolith $K/Z$ having order $p^a$ having cyclic complements and with $K$ abelian, or
    \item elementary abelian of order $p^2$ for some prime number $p$,
\end{enumerate}
and for each prime number $q < p^a$ or $q < p$, as the case may be, every finite factor group of $G$ has cyclic Sylow $q$-subgroups.
\end{theorem}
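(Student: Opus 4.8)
The plan is to prove the biconditional by reducing everything to the central quotient $\bar G := G/Z(G)$ and working with maximal abelian subgroups. First I would record the standing reduction: every abelian subgroup lies in a maximal one, and a maximal abelian subgroup $A$ satisfies $A = C_G(A)$, so in particular $Z := Z(G) \le A$. Hence any cover of $G$ by abelian subgroups can be refined, without increasing its size, to a cover by maximal abelian subgroups $A_1, \dots, A_n$, each containing $Z$. Passing to $\bar G$, the images $\bar A_i := A_i/Z$ cover $\bar G$, and conversely a subgroup $\bar H \le \bar G$ has abelian preimage exactly when the commutator pairing $\bar H \times \bar H \to Z$ induced by the central extension vanishes. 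Thus a minimal abelian cover of $G$ corresponds to a minimal cover of $\bar G$ by such ``isotropic'' abelian subgroups, and the problem becomes a statement about the pair $(\bar G, Z)$.

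For the forward direction I would assume $G$ is nonabelian and admits a minimal cover (of size $\sigma(G)$) by abelian subgroups, normalized as above. The pivotal step is to show that minimality forces the images $\bar A_i$ to intersect pairwise trivially, so that $\{\bar A_i\}$ is a partition of $\bar G$: if two images overlapped nontrivially, a counting comparison of $\sum_i(|\bar A_i|-1)$ against $|\bar G|-1$, together with the self-centralizing property of the $A_i$, should produce a strictly smaller cover, contradicting $\sigma(G)$-minimality. Once $\bar G$ is known to admit a nontrivial partition, the Classification Theorem applies and $\bar G$ is one of the listed types. I would then use that every part must have abelian preimage — hence be abelian and isotropic — to discard the non-solvable possibilities $PSL(2,p^n)$, $PGL(2,p^n)$, $Sz(2^{2n+1})$ and the Hughes--Thompson and $S_4$ cases. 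This leaves exactly two families: $\bar G$ a Frobenius group with abelian kernel and cyclic complements, giving case (1) with the monolith identified as the kernel (a minimal normal, hence elementary abelian, subgroup); or $\bar G$ a $p$-group, in which case the monolith is forced to be central, so case (1) is impossible and minimality drives $\bar G$ down to elementary abelian of order $p^2$ (case (2)). The Sylow-cyclicity hypothesis on factor groups I would extract as the precise arithmetic condition guaranteeing that this Frobenius partition is genuinely the cheapest cover by \emph{any} proper subgroups.

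For the converse I would start from the structural hypotheses and build the cover explicitly. In case (2) the $p+1$ preimages of the order-$p$ subgroups of $C_p \times C_p$ are each central-by-cyclic, hence abelian, and they cover $G$; since $\sigma(G) \le \sigma(\bar G) = p+1$ by Theorem~\ref{t:covering-number-p-groups} and Proposition~\ref{t:homomorphic-cover-numbers}, while no cover of $G$ can have fewer than $p+1$ members (a fact one checks directly for this central extension), the cover is minimal. In case (1), writing $\bar G = (K/Z) \rtimes \bar C$ with $\bar C$ cyclic, I would take the cover consisting of the normal abelian subgroup $K$ together with the preimages of the conjugates of $\bar C$; each complement preimage is abelian because it is central-by-cyclic, and $K$ is abelian by hypothesis. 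Verifying that this cover attains $\sigma(G)$ is where the Sylow-cyclicity condition re-enters: it is exactly what is needed to show that no proper subgroup can absorb two distinct Frobenius complements, so the displayed cover cannot be beaten.

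The main obstacle is the minimality bookkeeping rather than the structural classification. Producing abelian covers in both cases is routine once the reduction to $\bar G$ is in place; the delicate part is the two-way comparison with $\sigma(G)$ — showing in the forward direction that any overlap among the $\bar A_i$ can be exploited to shrink the cover, and in the converse that the divisibility hypothesis on Sylow subgroups of factor groups is precisely strong enough to forbid a cheaper cover by arbitrary subgroups. I expect the technical heart of the argument to be isolating this arithmetic condition and proving it equivalent to the irredundancy of the Frobenius partition.
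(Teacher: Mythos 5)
The paper gives no proof of this statement at all: it is quoted directly from Bryce and Serena~\cite{BryceSerena} and used later only as a black box (to note when $\sigma = \sigma_A = \sigma_P$ can coincide). So there is no internal proof to compare yours against; what follows assesses your sketch on its own terms. Your standing reduction is fine: maximal abelian subgroups are self-centralizing, hence contain $Z(G)$, and a minimal abelian cover can be normalized to consist of them. But your pivotal step --- that minimality forces the images $\bar A_i$ to intersect pairwise trivially in $G/Z(G)$ --- is justified by a mechanism that does not work. Comparing $\sum_i \left( |\bar A_i| - 1 \right)$ with $|\bar G| - 1$ can never ``produce a strictly smaller cover''; counting of that kind only shows partitions are efficient, not that an overlapping cover can be shrunk. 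The claim itself is true, but by a replacement argument you never state: if $x \in (A_i \cap A_j)\setminus Z(G)$ with $A_i \neq A_j$ abelian, then $C_G(x)$ is a \emph{proper} subgroup of $G$ containing $A_i \cup A_j$, so replacing the pair $A_i, A_j$ by the single subgroup $C_G(x)$ yields a cover of size $\sigma(G)-1$, contradicting the definition of $\sigma(G)$. Note that this uses minimality among \emph{all} covers, not merely among abelian ones, which is exactly why the theorem is stated for minimal covers that happen to be abelian.

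The second genuine gap is your case elimination after invoking the Classification Theorem. You propose to discard $S_4$, $PSL(2,p^n)$, $PGL(2,p^n)$, $Sz(2^{2n+1})$, and the Hughes--Thompson groups ``because every part must have abelian preimage, hence be abelian.'' That criterion eliminates nothing: $S_4$ is partitioned by the six cyclic subgroups generated by transpositions, its four Sylow $3$-subgroups, and its three cyclic subgroups of order $4$ (which absorb the double transpositions) --- all abelian; likewise $PSL(2,4) \cong A_5$ is partitioned by its five elementary abelian Sylow $2$-subgroups, ten Sylow $3$-subgroups, and six Sylow $5$-subgroups, again all abelian. What actually excludes these cases is that the induced partition must be a \emph{minimal} cover, i.e.\ have exactly $\sigma(G) = \sigma(G/Z(G))$ parts, and ruling them out requires comparing partition sizes with known covering numbers; in the solvable cases one must identify $\sigma$ via Tomkinson's formula, which is precisely where the hypothesis on cyclic Sylow $q$-subgroups of factor groups enters. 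Your sketch defers exactly this arithmetic (``is exactly what is needed'') in both directions of the biconditional, so the technical heart of the theorem is absent rather than merely compressed.
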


One might hope that if $H$ if a subgroup of another group $G$, then $\sigma(H) \leq \sigma(G)$ would hold. This is not always true. Kappe, Nikolova-Popova, and Swarz calculated that $\sigma(S_8) = 64$~\cite[Theorem 2.1]{KNPS}, while Kappe and Redden showed that $\sigma(A_8) = 71$~\cite{KappeRedden}. 

\section{The Powerful and Abelian Covering Numbers for p-Groups}\label{s:powerful-covering-properties}

We define a \textit{powerful cover} of $G$ as a cover consisting of only powerful subgroups. An \textit{abelian cover} of $G$ is a cover consisting of only abelian subgroups.  We define the \textit{powerful covering number of $G$} as the size of a minimal powerful cover, and the \textit{abelian covering number of $G$} as the size of a minimal abelian cover of $G$. As before, we write $\sigma(G)$ for the covering number of $G$. We write $\sigma_P(G)$ for the powerful covering number, and $\sigma_A(G)$ for the abelian covering number. It should be noted that cyclic groups do not have covers by proper subgroups. 

It is natural to ask if $\sigma_P$ and $\sigma_A$ share analagous properties to those of $\sigma$. As noted in ~\cite{Lubotzky1}, neither subgroups nor homomorphic pre-images of a powerful group are necessarily powerful, and this means that certain generalizations do not come easily. We now turn our attention to properties of $\sigma_P$ and $\sigma_A$.

\begin{proposition}\label{p:all-p-groups-have-powerful-covers}
Let $G$ be a finite noncyclic $p$-group. Then $G$ has a powerful cover.
\end{proposition}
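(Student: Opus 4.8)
The plan is to produce an explicit powerful cover of $G$ using its cyclic subgroups, leveraging the fact already recorded in the excerpt that every abelian $p$-group is powerful. Since a cyclic group is abelian, every cyclic subgroup of $G$ is powerful, so the only remaining points to check are that these subgroups are proper and that together they cover $G$.

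First I would observe that for each $g \in G$ the cyclic subgroup $\langle g \rangle$ is abelian, and hence powerful: its commutator subgroup is trivial, and the trivial subgroup is contained in both $G^p$ and $G^4$. Next, because $G$ is assumed noncyclic, no single element of $G$ generates the whole group, so $\langle g \rangle \ne G$ for every $g$; that is, each such subgroup is proper. This is precisely where the noncyclic hypothesis is used, and it is the only place it is needed.

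Finally, I would note that $g \in \langle g \rangle$ for every $g \in G$, so the union $\bigcup_{g \in G} \langle g \rangle$ equals $G$. Since $G$ is finite, only finitely many distinct cyclic subgroups arise, so the collection $\{ \langle g \rangle \mid g \in G \}$ is a finite family of proper powerful subgroups whose union is $G$ --- in other words, a powerful cover. There is no real obstacle to overcome here: the entire content of the statement is that noncyclicity forces every cyclic subgroup to be proper, while abelianness makes each cyclic subgroup automatically powerful. One could, if desired, pass to a smaller subfamily (for instance, retaining only the maximal cyclic subgroups), but this refinement is unnecessary for establishing mere existence of a powerful cover.
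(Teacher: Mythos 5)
Your proposal is correct and follows essentially the same argument as the paper: cover $G$ by its cyclic subgroups, which are powerful because they are abelian, and are proper precisely because $G$ is noncyclic. The paper states this in one line; your version simply fills in the routine details (one tiny notational slip: the powerfulness condition for $H=\langle g\rangle$ is $[H,H]\subseteq H^p$ or $H^4$, not $G^p$ or $G^4$, though the trivial commutator subgroup lies in either).
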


\begin{proof}
The group $G$ has a cover by the collection of all cyclic subgroups, and cyclic groups are powerful. 
\end{proof}

\begin{lemma}\label{l:powerful-abelian-covering-relationship}
If $G$ is a finite noncyclic $p$-group, the relationship
\[
\sigma(G) \leq \sigma_{P}(G) \leq \sigma_{A}(G)
\]
always holds. 
\end{lemma}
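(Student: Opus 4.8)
The plan is to recognize that all three quantities are obtained by minimizing the number of subgroups over three \emph{nested} families of covers, so that a minimum taken over a smaller family can only be larger. Concretely, I would show that every abelian cover is a powerful cover, and that every powerful cover is a cover in the ordinary sense; the two inequalities then follow formally.

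First I would establish the inclusions of families. For the right-hand inequality, the key fact is that every abelian $p$-group is powerful, which was already observed in the Background section (if $G$ is abelian then $[G,G]$ is trivial, so it is contained in $G^4$ or $G^p$ trivially). Hence any proper abelian subgroup is in particular a proper powerful subgroup, and therefore every abelian cover of $G$ is simultaneously a powerful cover of $G$. For the left-hand inequality, a powerful cover is by definition a collection of proper subgroups whose union is $G$, which is exactly a cover in the sense of $\sigma$. Thus, letting $\mathcal{A}$, $\mathcal{P}$, and $\mathcal{C}$ denote the families of abelian covers, powerful covers, and all covers of $G$, respectively, we have $\mathcal{A} \subseteq \mathcal{P} \subseteq \mathcal{C}$.

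Next I would translate these containments into inequalities between the minima. Since $\sigma_A(G)$, $\sigma_P(G)$, and $\sigma(G)$ are the minimal sizes of members of $\mathcal{A}$, $\mathcal{P}$, and $\mathcal{C}$ respectively, a smaller indexing family yields a larger or equal minimum, giving $\sigma(G) \leq \sigma_P(G) \leq \sigma_A(G)$. In elementary terms: I would take a minimal abelian cover, of size $\sigma_A(G)$, observe that it is a powerful cover, and conclude $\sigma_P(G) \leq \sigma_A(G)$; likewise a minimal powerful cover, of size $\sigma_P(G)$, is a cover, yielding $\sigma(G) \leq \sigma_P(G)$.

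The only point needing care—and it is not a genuine obstacle—is to confirm that all three quantities are well-defined, i.e.\ that each family is nonempty. This is supplied by Proposition~\ref{p:all-p-groups-have-powerful-covers} and its proof: the collection of all cyclic subgroups of the noncyclic $p$-group $G$ consists of proper subgroups whose union is $G$, and these are abelian, hence powerful. So $\mathcal{A}$ is nonempty, and therefore so are the larger families $\mathcal{P}$ and $\mathcal{C}$. With nonemptiness in hand, the nesting argument completes the proof, and I expect no further difficulty beyond keeping track of the fact that the ``proper subgroup'' requirement is already built into the definition of each type of cover.
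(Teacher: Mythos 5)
Your proof is correct and is essentially the same argument as the paper's, which simply observes that abelian groups are powerful (so abelian covers are powerful covers, and powerful covers are covers, making the minima nested). Your additional care about nonemptiness via Proposition~\ref{p:all-p-groups-have-powerful-covers} is a reasonable explicit touch that the paper leaves implicit.
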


\begin{proof}
This follows immediately from the fact that abelian groups are powerful. 
\end{proof}

\begin{remark}
The Theorem of Bryce and Serena~\cite{BryceSerena} given in Theorem~\ref{t:minimal-covers-by-abelian} above completely characterizes the groups $G$ for which $\sigma(G) = \sigma_P(G) = \sigma_A(G)$. 
\end{remark}

\begin{theorem}\label{t:not-powerful-case-covering-reduces}
Let $G$ be a finite noncyclic $p$-group. If $K$ is a homomorphic image of $G$ such that $K$ is not powerful, then $\sigma_P(K) \leq \sigma_P(G)$.
\end{theorem}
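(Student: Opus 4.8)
The plan is to push a minimal powerful cover of $G$ forward through the defining surjection onto $K$ and verify that the resulting collection is again a valid powerful cover. Since $G$ is noncyclic, Proposition~\ref{p:all-p-groups-have-powerful-covers} guarantees that $\sigma_P(G)$ is defined, so I would fix a minimal powerful cover $\{H_1, \ldots, H_m\}$ of $G$ with $m = \sigma_P(G)$, and let $\alpha \colon G \to K$ be a surjective homomorphism.

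First I would record that $K$ is itself a noncyclic $p$-group, so that $\sigma_P(K)$ is meaningful. As a homomorphic image of a $p$-group, $K$ is a $p$-group; and if $K$ were cyclic it would be abelian, hence powerful, contradicting the hypothesis. Thus $K$ is noncyclic and, by Proposition~\ref{p:all-p-groups-have-powerful-covers}, admits a powerful cover.

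Next I would examine the image subgroups $\alpha(H_1), \ldots, \alpha(H_m)$. Each $\alpha(H_i)$ is a homomorphic image of the powerful group $H_i$, and since the homomorphic image of a powerful group is powerful, every $\alpha(H_i)$ is powerful. Because $\alpha$ is surjective, $\bigcup_{i=1}^m \alpha(H_i) = \alpha\left( \bigcup_{i=1}^m H_i \right) = \alpha(G) = K$, so these images cover $K$.

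The main obstacle, and the only place where the hypothesis that $K$ is \emph{not} powerful enters, is verifying that each $\alpha(H_i)$ is a proper subgroup of $K$, as a cover requires. I would argue by contradiction: if $\alpha(H_i) = K$ for some $i$, then $K$ would be a homomorphic image of the powerful group $H_i$ and therefore powerful, contrary to assumption. Hence every $\alpha(H_i)$ is proper, and $\{\alpha(H_1), \ldots, \alpha(H_m)\}$ is a powerful cover of $K$ with at most $m$ members. Discarding any repetitions only decreases the count, so $\sigma_P(K) \leq m = \sigma_P(G)$, as desired.
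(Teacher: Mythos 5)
Your proposal is correct and follows essentially the same argument as the paper: push a minimal powerful cover of $G$ forward through the surjection, note that images of powerful groups are powerful and cover $K$, and use the hypothesis that $K$ is not powerful exactly where the paper does — to guarantee each image is a proper subgroup. Your added observations (that $K$ is necessarily a noncyclic $p$-group, and that repeated images can be discarded) are sound refinements the paper leaves implicit.
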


\begin{proof}
 Suppose $G = \bigcup_{i=1}^n H_i$, where each $H_i$ is a proper powerful subgroup. Let $\alpha: G \rightarrow K$ be a surjective homomorphism. We see that each $\alpha(H_i)$ is also a powerful subgroup as the homomorphic image of a powerful subgroup, and that \[
 \bigcup_{i=1}^n \alpha\left(H_i\right) = K.
 \]
 It is important to check that each $\alpha(H_i
 )$ is a proper subgroup, but this ensured since each $\alpha(H_i)$ is powerful and $K$ is not. 
\end{proof}

\begin{theorem}\label{t:case-where-sigma-p-preserved-under-direct-product}
If $G$ is a finite noncyclic $p$-group such that $G$ is not powerful,  and $K$ is a powerful finite $p$-group, then  $\sigma_P(G \times K) = \sigma_P(G).$
\end{theorem}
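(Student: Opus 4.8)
The plan is to establish the two inequalities $\sigma_P(G) \le \sigma_P(G \times K)$ and $\sigma_P(G \times K) \le \sigma_P(G)$ separately, since together they give the claimed equality. First I would check that both covering numbers are defined: $G$ is noncyclic by hypothesis, and $G \times K$ is noncyclic because it has the noncyclic group $G$ as a homomorphic image (a quotient of a cyclic group is cyclic), so Proposition~\ref{p:all-p-groups-have-powerful-covers} guarantees that each admits a powerful cover. Moreover $G \times K$ is a $p$-group, as both $G$ and $K$ are.

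For the lower bound I would invoke Theorem~\ref{t:not-powerful-case-covering-reduces}, applied to the ambient group $G \times K$. The projection $\pi : G \times K \to G$ onto the first factor is a surjective homomorphism, so $G$ is a homomorphic image of $G \times K$, and by hypothesis $G$ is not powerful. Since $G \times K$ is a noncyclic $p$-group, Theorem~\ref{t:not-powerful-case-covering-reduces} (with $G \times K$ as the ambient group and $G$ as the non-powerful image) yields $\sigma_P(G) \le \sigma_P(G \times K)$.

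For the upper bound I would take a minimal powerful cover $G = \bigcup_{i=1}^{n} H_i$ with $n = \sigma_P(G)$, each $H_i$ a proper powerful subgroup of $G$, and lift it to the family $\{ H_i \times K \}_{i=1}^{n}$. Each $H_i \times K$ is powerful, being the direct product of the powerful groups $H_i$ and $K$; it is a proper subgroup of $G \times K$ because $H_i$ is proper in $G$; and
$$
\bigcup_{i=1}^{n} (H_i \times K) = \Bigl( \bigcup_{i=1}^{n} H_i \Bigr) \times K = G \times K .
$$
Hence $\{ H_i \times K \}_{i=1}^{n}$ is a powerful cover of $G \times K$ of size $n$, which gives $\sigma_P(G \times K) \le n = \sigma_P(G)$, completing the proof.

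The construction in the upper bound is routine, and the union identity above is immediate. The one place where the hypotheses genuinely enter is the lower bound: the assumption that $G$ is \emph{not} powerful is exactly what permits the application of Theorem~\ref{t:not-powerful-case-covering-reduces}, and is therefore the crux of the argument. Were $G$ powerful, the projected images of a cover could fail to be proper subgroups and the lower bound would collapse, which is precisely why the non-powerful hypothesis cannot be dropped.
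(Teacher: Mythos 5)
Your proof is correct and follows essentially the same route as the paper: the upper bound via the cover $\{H_i \times K\}$ of $G \times K$ built from a minimal powerful cover of $G$, and the lower bound by applying Theorem~\ref{t:not-powerful-case-covering-reduces} to $G$ as a non-powerful homomorphic image of $G \times K$. Your additional checks (noncyclicity of $G \times K$, properness of each $H_i \times K$, and the remark on why the non-powerful hypothesis is essential) are sound refinements of the paper's argument rather than a different approach.
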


\begin{proof}
Suppose that $G$ has a minimal covering by powerful subgroups $H_1, \ldots, H_n$. In $G \times K$, each subgroup $H_i \times K$  is powerful, since the direct products of powerful groups is powerful. Then we notice
\[
G \times K = \left( \bigcup_{i=1}^n H_i \right) \times K = \bigcup_{i=1}^n \left( H_i \times K \right),  
\]
which establishes that $\sigma_P(G \times K) \leq \sigma(G)$.  To see that $\sigma_P(G \times K) \geq \sigma_P(G)$, we note that $G$ is a non-powerful homomorphic image of $G \times K$, and apply Theorem~\ref{t:not-powerful-case-covering-reduces}. 
\end{proof}

As we will see in Section~\ref{s:dihedral-powerful-covering-numbers}, the the analog of Theorem~\ref{t:homomorphic-cover-numbers} does not hold in general for $\sigma_P$. However, there is a special case where it does. 

\begin{theorem}\label{t:special-case-powerful-preimage}
Let $G$ and $K$ be finite noncyclic $p$-groups. Suppose $H_1, \ldots, H_q$ is a minimal powerful cover of $K$. If there exists a surjective homomorphism $\alpha: G \rightarrow K$ such that $\alpha^{-1}(H_i)$ is powerful for each $i = 1, \ldots, q$, then $\sigma_P(G) \leq \sigma_P(K)$.
\end{theorem}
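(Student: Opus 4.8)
The plan is to show that the family of preimages $\{\alpha^{-1}(H_i)\}_{i=1}^{q}$ is itself a powerful cover of $G$. Since this family has exactly $q = \sigma_P(K)$ members, exhibiting it immediately yields $\sigma_P(G) \leq \sigma_P(K)$. Three things must be checked: that each $\alpha^{-1}(H_i)$ is a proper subgroup of $G$, that the union of the $\alpha^{-1}(H_i)$ is all of $G$, and that each $\alpha^{-1}(H_i)$ is powerful.

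Powerfulness is precisely the standing hypothesis, so no argument is required there. For the covering property, I would take an arbitrary $g \in G$; its image $\alpha(g)$ lies in $K = \bigcup_{i=1}^{q} H_i$, so $\alpha(g) \in H_i$ for some $i$, and hence $g \in \alpha^{-1}(H_i)$, giving $G = \bigcup_{i=1}^{q} \alpha^{-1}(H_i)$. For properness, I would argue contrapositively: if $\alpha^{-1}(H_i) = G$, then surjectivity of $\alpha$ forces $K = \alpha(G) \subseteq H_i$, contradicting the fact that $H_i$, as a member of a cover, is a proper subgroup of $K$. Combining these observations shows that $\{\alpha^{-1}(H_i)\}_{i=1}^{q}$ is a powerful cover of $G$ of size $q$, which is what we want.

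The crux—and the reason this is stated as a special case rather than a general principle—is the powerfulness hypothesis on the preimages. As the paper has already noted, homomorphic preimages of powerful groups need not be powerful, so without this assumption the preimage construction would simply fail to produce a powerful cover. In effect, all of the genuine content has been front-loaded into the hypothesis, and the remaining verification is routine set-theoretic and group-theoretic bookkeeping; I do not expect any serious obstacle once the preimages are assumed powerful.
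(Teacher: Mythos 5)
Your proposal is correct and follows exactly the paper's argument: the preimages $\alpha^{-1}(H_i)$ form a powerful cover of $G$ of size $q = \sigma_P(K)$, hence $\sigma_P(G) \leq \sigma_P(K)$. In fact you spell out the routine verifications (covering, properness via surjectivity, powerfulness by hypothesis) that the paper's one-line proof leaves implicit.
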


\begin{proof}
Let $H_1, \ldots, H_q$ be a minimal powerful cover of $K$. If $\alpha^{-1}(H_i)$ is powerful for each $i = 1, \ldots, q$, then $\{ \alpha^{-1}(H_i) \}_{i=1}^q$ forms a powerful cover of $G$.  
\end{proof}

We are also able to adapt the result of  Corollary~\ref{c:direct-product-covering-number}, but with a different proof. 

\begin{theorem}\label{t:direct-product-is-nice}
Let $G$ and $H$ be finite noncyclic powerful $p$-groups. Then $\sigma_P(G \times H) \leq \min \left(\sigma_P(G), \sigma_P(H) \right)$. 
\end{theorem}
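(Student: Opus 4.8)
The plan is to mimic the direct construction behind Corollary~\ref{c:direct-product-covering-number}, but to avoid the homomorphic-image inequality of Proposition~\ref{t:homomorphic-cover-numbers}, whose analog for $\sigma_P$ is unavailable in general. Instead I would build a powerful cover of $G \times H$ explicitly out of a powerful cover of a single factor. As a preliminary bookkeeping point, I would first observe that $\sigma_P(G \times H)$ is defined at all: since $G$ is a homomorphic image of $G \times H$ under the projection and $G$ is noncyclic, $G \times H$ is noncyclic (a cyclic group has only cyclic quotients), so by Proposition~\ref{p:all-p-groups-have-powerful-covers} it admits a powerful cover.

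Next I would fix a minimal powerful cover $G = \bigcup_{i=1}^{q} A_i$, where $q = \sigma_P(G)$ and each $A_i$ is a proper powerful subgroup of $G$, and consider the $q$ subgroups $A_i \times H$ of $G \times H$. Because $A_i$ is a proper subgroup of $G$, each $A_i \times H$ is a proper subgroup of $G \times H$; and because $A_i$ and $H$ are both powerful, each $A_i \times H$ is powerful, since the direct product of powerful groups is powerful. As $G \times H = \left( \bigcup_{i=1}^{q} A_i \right) \times H = \bigcup_{i=1}^{q} \left( A_i \times H \right)$, these subgroups form a powerful cover of $G \times H$, which gives $\sigma_P(G \times H) \leq q = \sigma_P(G)$. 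Running the identical argument with the roles of $G$ and $H$ interchanged yields $\sigma_P(G \times H) \leq \sigma_P(H)$, and combining the two inequalities gives the claimed bound.

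The step that genuinely uses the hypotheses — and marks the departure from Cohn's proof of Corollary~\ref{c:direct-product-covering-number} — is the verification that each $A_i \times H$ is powerful. This is precisely where the assumption that the \emph{un-covered} factor $H$ is itself powerful is indispensable: for an arbitrary $H$ the product $A_i \times H$ need not be powerful, and the construction would break down. The remaining ingredients (properness of the pieces and the covering identity) are routine, so I expect no real obstacle beyond ensuring powerfulness is preserved.
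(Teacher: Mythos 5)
Your proposal is correct and is essentially the paper's own proof: both form the cover $\{A_i \times H\}$ from a minimal powerful cover of one factor, using the hypothesis that the full second factor is powerful so that each piece is a powerful (and proper) subgroup, and then appeal to symmetry to get the minimum. The only cosmetic difference is that the paper exhibits both covers at once rather than arguing one inequality and invoking symmetry.
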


\begin{proof}
Suppose that $K_1, \ldots, K_m$ forms a minimal powerful cover of $G$, and $L_1, \ldots, L_n$ forms a minimal powerful cover of $H$. Then the collections $\{G \times K_i \}_{i=1}^m$ and $\{H \times L_j \}_{j=1}^n$ both form powerful covers of $G \times H$, so $\sigma_P(G \times H) \leq \min \left( \sigma_P(G), \sigma_P(H) \right)$.
\end{proof}

\section{Powerful Covering Number of Dihedral 2-Groups}\label{s:dihedral-powerful-covering-numbers}

The purpose of this section is to establish the powerful covering numbers of the dihedral $2$-groups. First, we establish several facts regarding the subgroup structure of dihedral $2$-groups. All of these facts are well-known (see \cite{CarneGeometry} or \cite{ConradDihedral}), but we will provide self-contained proofs where convenient. Then, we use the properties to prove the main results on powerful covers of dihedral $2$-groups.

\subsection{Elements and Subgroups of Dihedral 2-Groups}

As previously noted, we define the group $D(2^n)$ as the set of isometries of a regular $2^n$-gon in the Cartesian plane. For definiteness and ease of calculation, let us take the vertices of this $2^n$-gon are at the points $\left(\cos\left(\displaystyle\frac{2\pi k}{2^n} \right), \sin \left( \displaystyle\frac{2 \pi  k}{2^n} \right) \right)$ for $i = 0, 1, \ldots,  2^n - 1$. We think of the elements of $D(2^n)$ as functions, with a left action on the points of the $2^n$-gon by function composition. 

We will distinguish two elements of $D(2^n)$ as a generating set. Let $a$  be the reflection across the line that makes an angle of $\displaystyle\frac{\pi}{2^n}$ with the $x$-axis, and let $b$ be the reflection of the $2^n$-gon about the $x$-axis.  We will re-use these symbols $a$ and $b$ regardless of the value of $n$; the context should always make the meaning clear. We will also write $e$ for the identity of any group, again relying on context to make clear which group. 

Since a reflection is always its own inverse, we see that in $D(2^n)$, $a^2 = b^2 = e$. A geometric calculation also establishes that $ab$ is equal to a counterclockwise rotation by $\displaystyle\frac{2 \pi}{2^n}$ radians. The rotation $ab$ will play a crucial role in our future calculations, so we record some of its properties now. 

\begin{lemma}\label{l:ab-order-lemma}
Let $n \geq 2$. The element $ab \in D(2^n)$ has order $2^{n}$, and for any $r \in \mathbb{Z}$,  $(ab)^{r} = (ba)^{2^n - r}$. 
\end{lemma}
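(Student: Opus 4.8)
The plan is to treat the two assertions separately: derive the order of $ab$ from the geometric description already recorded, and derive the formula from the reflection relation $(ab)^{-1} = ba$ together with that order.

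First I would compute the order of $ab$. We have noted that $ab$ is a counterclockwise rotation by $\displaystyle\frac{2\pi}{2^n}$ radians, so for every positive integer $k$ the power $(ab)^k$ is a counterclockwise rotation by $\displaystyle\frac{2\pi k}{2^n}$ radians. Such a rotation equals the identity isometry precisely when $\displaystyle\frac{2\pi k}{2^n}$ is an integer multiple of $2\pi$, that is, when $2^n \mid k$. The least positive $k$ with this property is $2^n$, so $ab$ has order exactly $2^n$, and in particular $(ab)^{2^n} = e$.

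Next I would establish that $ba$ is the inverse of $ab$. Since $a$ and $b$ are reflections, $a^2 = b^2 = e$, so $a^{-1} = a$ and $b^{-1} = b$. Hence $(ab)^{-1} = b^{-1}a^{-1} = ba$. Combining this with the order computation, for any $r \in \mathbb{Z}$ I compute
\[
(ba)^{2^n - r} = (ab)^{-(2^n - r)} = (ab)^{r - 2^n} = (ab)^r (ab)^{-2^n} = (ab)^r,
\]
which is the claimed identity.

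There is no serious obstacle here; the computation is routine. The only points requiring care are keeping the exponent bookkeeping straight and observing that the relation $(ab)^{2^n} = e$ coming from the order computation is precisely what cancels the $-2^n$ in the exponent, so that the formula holds for every integer $r$, whether positive, negative, or zero.
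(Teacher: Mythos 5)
Your proof is correct and follows essentially the same route as the paper's: both parts rest on the geometric fact that $ab$ is a counterclockwise rotation by $\frac{2\pi}{2^n}$ radians (giving the order) and on the observation that $a^2 = b^2 = e$ forces $ba = (ab)^{-1}$. The only difference is in the last step, where the paper identifies $(ab)^r$ and $(ba)^{2^n - r}$ geometrically as the same rotation described counterclockwise and clockwise, while you close with exponent arithmetic using $(ab)^{2^n} = e$ --- an equally valid and, if anything, slightly more self-contained finish that makes the case of arbitrary integer $r$ explicit.
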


\begin{proof}
The order of $ab$ follows directly from the fact that $ab$ represents a counterclockwise rotation of $\displaystyle\frac{2\pi}{2^n}$ radians. For the second statement, notice that since $a$ and $b$ both have order two, $(ab)(ba) = e$. Thus we know that $ba = (ab)^{-1}$ and hence represents a clockwise rotation of  $-\displaystyle\frac{2\pi}{2^n}$ radians. Thus, $(ab)^r$ represents a counterclockwise rotation of $\displaystyle\frac{r 2 \pi}{2^n}$, which gives the same transformation of the clockwise rotation of $\displaystyle\frac{(2^n - r)2 \pi}{2^n}$ radians represented by $(ba)^{2^n - r}$.
 \end{proof}
 
\begin{lemma}\label{l:normal-form}
Suppose $g \in D(2^n)$. 
\begin{enumerate}
\item[(i.)] The element $g$ can be written uniquely as $g = (ab)^ja^k$ for some integers $j$ and $k$ with $0 \leq j < 2^{n}$ and $0 \leq k \leq 1$. 
\item[(ii.)] If $g = (ab)^ja$ for some $j$ with $0 \leq j < 2^{n}$, then the order of $g$ has order two. 
\item[(iii.)] If $g = (ab)^j$ for some $j$ with $0 \leq j < 2^{n}$, then the order of $g$ is  $$\displaystyle\frac{2^{n-1}}{\gcd(j,2^{n-1})}$$. 
\item[(iv.)] For any $g \in D(2^n)$, $g (ab)^{2^{n-1}} = (ab)^{2^{n-1}} g$. 
\end{enumerate}
\end{lemma}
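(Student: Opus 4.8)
The plan is to handle all four parts through a single structural relation---that conjugation by $a$ inverts the rotation $ab$---together with the order of $ab$ recorded in Lemma~\ref{l:ab-order-lemma} and the rotation/reflection dichotomy for dihedral groups. First I would establish the identity $a(ab)a = (ab)^{-1}$: since $a^2 = b^2 = e$ we have $a(ab)a = ba$, and $ba = (ab)^{-1}$ by Lemma~\ref{l:ab-order-lemma}. Iterating yields $a(ab)^j a = (ab)^{-j}$ for every integer $j$, and this one relation drives parts (ii) and (iv).

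For part (i) I would argue by counting. By Lemma~\ref{l:ab-order-lemma} the powers $(ab)^j$ with $0 \le j < 2^n$ are $2^n$ distinct rotations. Appending a factor $a^k$ with $k \in \{0,1\}$, the elements $(ab)^j$ are rotations while the elements $(ab)^j a$ are reflections, a rotation composed with the reflection $a$. Hence the $2^{n+1}$ expressions $(ab)^j a^k$ are pairwise distinct: a rotation cannot equal a reflection, two rotations agree only when their exponents coincide (as $0 \le j < 2^n$), and likewise two reflections agree only after cancelling the common trailing factor $a$. Since $|D(2^n)| = 2^{n+1}$, these expressions exhaust the group, giving both existence and uniqueness.

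Part (ii) is then immediate: an element of the form $(ab)^j a$ is a reflection and hence has order two; alternatively, the conjugation identity gives $\big((ab)^j a\big)^2 = (ab)^j\big(a(ab)^j a\big) = (ab)^j (ab)^{-j} = e$, and the element is plainly not the identity. For part (iv) I would show that the half-turn $(ab)^{2^{n-1}}$ commutes with a generating set. It commutes with $ab$ because powers of a common element commute, and the conjugation identity gives $a(ab)^{2^{n-1}}a = (ab)^{-2^{n-1}}$, which equals $(ab)^{2^{n-1}}$ precisely because $(ab)^{2^n} = e$. Since $a$ and $ab$ generate $D(2^n)$, the element $(ab)^{2^{n-1}}$ is central, which is the assertion.

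For part (iii), the rotation $(ab)^j$ lies in the cyclic group generated by $ab$, so I would compute its order from the order of $ab$ by the elementary formula for the order of a power in a cyclic group, namely that $x^j$ has order equal to the order of $x$ divided by $\gcd$ of $j$ with that order. Applying this formula to $x = ab$ with the order supplied by Lemma~\ref{l:ab-order-lemma} and then reducing the resulting powers of two should produce the claimed value $\frac{2^{n-1}}{\gcd(j,2^{n-1})}$. The delicate step here---and the one I expect to be the main obstacle---is exactly this bookkeeping of the powers of two inside the $\gcd$, since it is where the stated exponent must be reconciled against the order of $ab$; the remaining three parts rely only on the conjugation identity and the order of $ab$ and should go through routinely.
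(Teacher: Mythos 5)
Your treatment of parts (i), (ii) and (iv) is correct, and in two places it is actually tighter than the paper's own proof. For (i) the paper only rewrites an arbitrary word in $a,b$ into the form $(ab)^j a^k$ (existence) and never addresses uniqueness; your counting argument --- $2^{n+1}$ pairwise distinct expressions in a group of order $2^{n+1}$, with the rotation/reflection dichotomy separating the two families --- delivers existence and uniqueness at once. For (iv) the paper verifies commutation by a direct computation in the two cases $k=0,1$ of the normal form; your observation that the half-turn commutes with the generating set $\{a, ab\}$, via $a(ab)^{2^{n-1}}a=(ab)^{-2^{n-1}}=(ab)^{2^{n-1}}$, is an equivalent and slightly more conceptual route. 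Part (ii) matches the paper (a reflection has order two), and your algebraic alternative using $a(ab)^j a = (ab)^{-j}$ is also fine.

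The obstacle you flagged in part (iii) is real, and it is not bookkeeping you can push through: the printed formula is irreconcilable with Lemma~\ref{l:ab-order-lemma}. That lemma (correctly, for the isometry group of a regular $2^n$-gon) gives $ab$ order $2^n$, so the standard order-of-a-power formula you invoke yields that $(ab)^j$ has order $2^n/\gcd(j,2^n)$, not $2^{n-1}/\gcd(j,2^{n-1})$; for $j=1$ the two values are $2^n$ and $2^{n-1}$, and already in $D(4)$ the quarter-turn $ab$ has order $4$, not $2$. In fact the paper's own one-line proof of (iii) asserts that $ab$ has order $2^{n-1}$, directly contradicting Lemma~\ref{l:ab-order-lemma} and the surrounding results (e.g.\ $\langle ab\rangle\cong C_{2^n}$ in Proposition~\ref{p:dihedral-maximal-subgroups}, and the central involution being $(ab)^{2^{n-1}}$); the exponent $2^{n-1}$ in (iii) appears to be a typo, perhaps a residue of the alternative convention in which $D(2^n)$ denotes the dihedral group of order $2^n$. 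So your derivation is the correct one: state the conclusion as $2^n/\gcd(j,2^n)$ rather than trying to massage it into the printed formula, and note that the statement of (iii) needs this correction.
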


\begin{proof}
\begin{enumerate}\item[(i)]  Since $a$ and $b$ both have order two, we never need to write two consecutive $a$'s or $b$'s, so any element in $D(2^n)$ can be written as a product of alternating $a$'s and $b$'s. If an element begins with $b$, then using  the fact that $(ab)^r = (ba)^{2^n - r}$ (see Lemma~\ref{l:ab-order-lemma}) , we can always express the element so that it begins with $a$. 
\item[(ii)] The elements of $D(2^n)$ are either rotations or reflections. Since the rotations are powers of $ab$, it follows that $g=(ab)^r a$ is a reflection, so it has order two. 
\item[(iii)]  This follows from the fact that $(ab)$ has order $2^{n-1}$ and well-known facts about cyclic groups (see  See Theorem 7 in Section 2.3 of~\cite{Dummit-Foote}). 
\item[(iv)] Let $g \in D(2^n)$. By part (i), $g = (ab)^ja^k$. If $k = 0$, then the statement holds since powers of $ab$ commute. If $k = 1$, then collecting powers of $(ab)$ and applying Lemma~\ref{l:ab-order-lemma}, we have 
\begin{align*}
(ab)^{2^{n-1}} g &= (ab)^{2^{n-1}} (ab)^j a \\
&= (ab)^{2^{n-1} +j} a \\
&= (ab)^j (ab)^{2^{n-1}} a  \\
&= (ab)^j a (ba)^{2^{n-1}} \\
&= (ab)^j a (ab)^{2^{n-1}} \\
&= g (ab)^{2^{n-1}}. 
\end{align*}
\end{enumerate}
\end{proof}

Note that the form for elements given in Lemma~\ref{l:normal-form} may not be the shortest or most intuitive form for writing the element. For instance, the generator $b \in D_4$ would be written as $(ab)^3a$ under the convention we've established. Lemma~\ref{l:normal-form} also establishes the following characterization of elements of order two in $D(2^n)$. 

\begin{corollary}\label{c:elements-of-order-two}
Let $G = D(2^n)$. If $g \in G$, then $g$ has order two if and only if $g = (ab)^ja$ for some $0 \leq k \leq 2^n$, or $g = (ab)^{2^{n-1}}$. 
\end{corollary}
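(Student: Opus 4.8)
The plan is to characterize the elements of order two in $D(2^n)$ by combining the unique normal form from Lemma~\ref{l:normal-form}(i) with the order computations in parts (ii) and (iii). Every element $g$ can be written uniquely as $g = (ab)^j a^k$ with $0 \leq j < 2^n$ and $0 \leq k \leq 1$, so I would split the analysis into the two cases $k=1$ and $k=0$ and determine in each case precisely when the element has order exactly two.

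First I would handle the reflection case $k=1$, where $g = (ab)^j a$. By Lemma~\ref{l:normal-form}(ii) every such element is a reflection and therefore has order exactly two. This immediately shows that all elements of the form $(ab)^j a$ belong to the set of involutions, accounting for the first family in the statement. Next I would handle the rotation case $k=0$, where $g = (ab)^j$. By Lemma~\ref{l:normal-form}(iii) the order of $(ab)^j$ is $2^{n-1}/\gcd(j, 2^{n-1})$, so I would set this quantity equal to $2$ and solve for $j$. This forces $\gcd(j, 2^{n-1}) = 2^{n-2}$, whose only solution in the range $0 \leq j < 2^n$ giving a genuine order-two rotation (rather than the identity or a higher-order element) is $j = 2^{n-1}$; one checks directly that $(ab)^{2^{n-1}}$ squares to $(ab)^{2^n} = e$ and is nontrivial. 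This yields the single exceptional rotation appearing in the statement.

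Finally, I would assemble the two cases to conclude that $g$ has order two if and only if $g = (ab)^j a$ for some $j$ or $g = (ab)^{2^{n-1}}$, which is exactly the claimed characterization. The forward direction follows because any order-two element is either a reflection (hence of the first form) or a rotation (hence, by the divisor computation, equal to $(ab)^{2^{n-1}}$), and the reverse direction follows from the order computations just carried out.

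The main obstacle is largely bookkeeping rather than conceptual: I must be careful in the rotation subcase to solve the equation $2^{n-1}/\gcd(j,2^{n-1}) = 2$ correctly and verify that $j = 2^{n-1}$ is the unique solution in the stated range, while confirming that this rotation is genuinely nontrivial (it is, since $2^{n-1}$ is not a multiple of $2^n$). A minor subtlety worth flagging is that the statement as written uses the index variable $k$ in the range $0 \le k \le 2^n$ for the first family, which is a typo for $j$; the intended meaning is that $g = (ab)^j a$ for some $0 \le j < 2^n$, matching the normal form of Lemma~\ref{l:normal-form}(i).
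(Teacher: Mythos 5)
Your overall strategy---case analysis on the normal form of Lemma~\ref{l:normal-form}, reflections handled by part (ii), rotations by part (iii)---is exactly how the paper intends this corollary to be read (the paper offers no separate proof, presenting it as an immediate consequence of that lemma). However, your rotation case contains a genuine arithmetic error. From $2^{n-1}/\gcd(j,2^{n-1})=2$ you correctly deduce $\gcd(j,2^{n-1})=2^{n-2}$, but the solutions of this equation in the range $0\le j<2^n$ are $j=2^{n-2}$ and $j=3\cdot 2^{n-2}$, not $j=2^{n-1}$: indeed $\gcd(2^{n-1},2^{n-1})=2^{n-1}$, so by the formula you quoted, $(ab)^{2^{n-1}}$ would have order $1$, contradicting both your own direct verification and the corollary itself. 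As written, your ``only if'' direction for rotations identifies the wrong elements as involutions.

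The root of the problem is not entirely yours: Lemma~\ref{l:normal-form}(iii) as printed is inconsistent with Lemma~\ref{l:ab-order-lemma}, which states that $ab$ has order $2^n$ (it is the rotation by $2\pi/2^n$ in a group of order $2^{n+1}$; taking $j=1$ in part (iii) would instead give order $2^{n-1}$). The correct formula is that $(ab)^j$ has order $2^n/\gcd(j,2^n)$. With this correction your argument goes through cleanly: setting $2^n/\gcd(j,2^n)=2$ forces $\gcd(j,2^n)=2^{n-1}$, so $j$ is an odd multiple of $2^{n-1}$, and the only such $j$ with $0\le j<2^n$ is $j=2^{n-1}$. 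Your direct check that $(ab)^{2^{n-1}}$ is a nontrivial square root of $e$ then covers the ``if'' direction, and the corrected divisor computation gives uniqueness for the ``only if'' direction. You were right to flag the typo ($k$ for $j$) in the corollary's statement; the same scrutiny applied to Lemma~\ref{l:normal-form}(iii) would have caught the inconsistency you inherited from the paper.
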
 

We now turn our attention to describing the subgroups of $D(2^n)$. Although these facts are standard, we will provide proofs that give necessary detail for our later results.  

\begin{proposition}\label{p:dihedral-maximal-subgroups}
For any $n \geq 2$, $D(2^n)$ has three maximal subgroups. Two of these maximal subgroups are isomorphic to $D(2^{n-1})$, and one is isomorphic to $C_{2^n}$.
\end{proposition}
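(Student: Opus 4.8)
The plan is to pass to the abelianization $G/[G,G]$, where $G = D(2^n)$, and exploit the fact that every maximal subgroup of a $2$-group is normal of index $2$ and therefore contains $[G,G]$. This puts the maximal subgroups of $G$ in bijection with the index-$2$ subgroups of $G/[G,G]$, so the whole problem reduces to understanding this quotient. Accordingly, the first step is to compute $[G,G]$ exactly.

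To do this I would apply Proposition~\ref{p:normal-closure-facts}(i) with the generating set $\{a,b\}$. Since $a^{-1}=a$ and $b^{-1}=b$, the only relevant commutator is $[a,b]=abab=(ab)^2$, so $[G,G]=\langle (ab)^2\rangle^G$. Next I would verify that $\langle (ab)^2\rangle$ is already normal: conjugating $(ab)^2$ by $a$ yields $(ba)^2=(ab)^{-2}$ by Lemma~\ref{l:ab-order-lemma}, while rotations commute with $(ab)^2$. Hence $[G,G]=\langle (ab)^2\rangle$, a cyclic group of order $2^{n-1}$ (again using Lemma~\ref{l:ab-order-lemma}, which gives $ab$ order $2^n$), and so $|G/[G,G]| = 2^{n+1}/2^{n-1}=4$. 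Because $G/[G,G]$ is abelian and generated by the images of the order-two elements $a$ and $b$, Corollary~\ref{c:C2-C2-fact} forces $G/[G,G]\cong C_2\times C_2$. A Klein four-group has exactly three subgroups of index $2$, which establishes that $G$ has exactly three maximal subgroups.

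It then remains to produce and identify them. Writing $\pi\colon G\to G/[G,G]$ for the quotient map, the three maximal subgroups are the $\pi$-preimages of the three index-$2$ subgroups $\langle \overline{ab}\rangle$, $\langle\bar a\rangle$, and $\langle\bar b\rangle$. Pushing the normal form $(ab)^ja^k$ through $\pi$ (so that $\pi((ab)^ja^k)=(\bar a\bar b)^j\bar a^k$, with $\bar a\bar b$ of order two) identifies these preimages, respectively, as $R=\langle ab\rangle$, $H_1=\langle (ab)^2,a\rangle$, and $H_2=\langle (ab)^2,b\rangle$; these are distinct since their images differ, and each has order $2^n$. By Lemma~\ref{l:ab-order-lemma}, $R\cong C_{2^n}$. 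For $H_1$ I would rewrite it as $\langle a,(ab)^2a\rangle$, noting $(ab)^2a$ is a reflection and hence of order two, so that $H_1$ is generated by two elements of order two; Theorem~\ref{l:order-two-dihedral} then makes it a dihedral $2$-group, and since $|H_1|=2^n=2\cdot 2^{n-1}$ we get $H_1\cong D(2^{n-1})$. The argument for $H_2$ is identical.

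The only genuinely delicate point is the clean computation of $[G,G]$ together with the bijection between maximal subgroups and index-$2$ subgroups of the abelianization; once that count returns exactly three, exhibiting the three explicit subgroups with their orders finishes the argument. A small edge case worth flagging is $n=2$, where $D(2^{n-1})=D(2)$ must be interpreted as $C_2\times C_2$: here $\langle(ab)^2\rangle$ is central of order two, so $H_1$ and $H_2$ are abelian and generated by two involutions, whence $H_1,H_2\cong C_2\times C_2$ by Corollary~\ref{c:C2-C2-fact}, consistent with the paper's convention that $C_2\times C_2$ is a dihedral $2$-group.
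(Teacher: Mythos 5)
Your proof is correct, and it reaches the paper's conclusion by a route that differs in one substantive place: the counting step. The paper also reduces to the fact that maximal subgroups of a $2$-group have index $2$, but it counts them as kernels of nontrivial homomorphisms $D(2^n) \to C_2$, enumerated by where the generators $a$ and $b$ are sent; it never computes $[G,G]$. You instead compute $[G,G] = \langle (ab)^2 \rangle$ explicitly, identify the abelianization as $C_2 \times C_2$, and count its three index-$2$ subgroups. These are two sides of the same correspondence, but your version makes rigorous a point the paper leaves implicit, namely that each of the three generator assignments actually extends to a well-defined homomorphism (equivalently, that the abelianization is Klein four rather than something smaller) --- the price being the commutator computation and the normality check for $\langle (ab)^2 \rangle$. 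After the count, the two arguments converge: both exhibit $\langle ab \rangle \cong C_{2^n}$ via Lemma~\ref{l:ab-order-lemma} and recognize the other two subgroups as dihedral by writing each as a product of two involutions and invoking Theorem~\ref{l:order-two-dihedral} plus the order count (your $\langle a, (ab)^2 a\rangle$ and $\langle b, (ab)^2 b\rangle$ are the same subgroups the paper describes as kernels). Two further points in your favor: you handle the degenerate case $n = 2$, where ``$D(2^{n-1})$'' must be read as $C_2 \times C_2$, which the paper glosses over; and your explicit generators are correct, whereas the paper's claim that $\ker \alpha_1$ is generated by $a$ and $aba$ is a slip ($aba$ has an odd number of $b$'s, so it lies outside that kernel; the intended second generator is $bab$, which matches your $(ab)^2 a$ up to the first generator). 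One tiny point you could tighten: when invoking Corollary~\ref{c:C2-C2-fact} you should note that the images $\bar a, \bar b$ are nontrivial and distinct (forced because the quotient has order $4$), since the corollary as stated needs two genuine involutions generating the group.
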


\begin{proof}
We know that the index of the kernel of a homomorphism is equal to the size of its image, and that maximal subgroups in a $p$-group have index $p$. Hence the maximal subgroups of $D(2^n)$ correspond to  the kernels of nontrivial homomorphisms from $D(2^n)$ to $C_2$. Such a homomorphism is completely determined by the images of the generators $a$ and $b$. Let $\alpha_1$ be the homomorphism for which $\alpha_1(a) = e$ and $\alpha_1(b) = \tau$. Let $H_1 = \ker \alpha_1$. It is not hard to see that this group consists of all elements with an even number of $b$'s, and therefore is generated by $a$ and $aba$. Since $a$ and $aba$ both have order two, $H_1$ is isomorphic to a finite dihedral group by Theorem~\ref{l:order-two-dihedral}. We know the order of $H_1$ is $2^n$, so $H_1$ is isomorphic to $D(2^{n-1})$. Let $\alpha_2$ be the homomorphism for which $\alpha_1(b) = e$ and $\alpha_1(a) = \tau$. Letting $H_2 = \ker \alpha_2$, an identical argument to that of $\alpha_1$ establishes that $H_2 \cong D(2^{n-1})$. Finally, let $\alpha_3$ be the homomorphism such that $\alpha_3(a) = \alpha_3(b) = \tau$. Let $H_3 = \ker \alpha_3$. It is clear that $H_3$ contains $ab$, so $H_3$ contains the cyclic subgroup $\langle ab \rangle$, which has order $2^n$ by Lemma~\ref{l:ab-order-lemma}. Since $H_3$ is maximal in $D(2^n)$, we know that $H_3$ has order $2^n$, so $H_3 = \langle ab \rangle \cong C_{2^n}$. 
\end{proof}

\begin{proposition}\label{p:dihedral-or-cyclic}
Every subgroup of a dihedral 2-group $D(2^n)$ is either dihedral or cyclic.
\end{proposition}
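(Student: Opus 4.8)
The plan is to exploit the fact that the rotations of $D(2^n)$ form a cyclic subgroup $R = \langle ab \rangle$ of order $2^n$ and index $2$, as established in Lemma~\ref{l:ab-order-lemma} and Proposition~\ref{p:dihedral-maximal-subgroups}, and then to argue by cases according to whether a given subgroup $H$ lies inside $R$ or not. The guiding observation is that every element of $D(2^n)$ is either a rotation (a power of $ab$, hence an element of $R$) or a reflection, so any element of $H$ lying outside $R$ must be a reflection, and therefore has order two by Corollary~\ref{c:elements-of-order-two}.

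First I would dispose of the case $H \subseteq R$. Since $R$ is cyclic, every one of its subgroups is cyclic, so $H$ is cyclic and there is nothing further to show.

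Next I would handle the case $H \not\subseteq R$. Here $H$ contains at least one reflection $s$, and because $R$ has index $2$ in $D(2^n)$, the intersection $D := H \cap R$ has index $2$ in $H$; in particular $D$ is cyclic. If $D$ is trivial, then $H = \{e, s\}$ is cyclic of order two. Otherwise write $D = \langle \rho \rangle$. The product $\rho s$ lies outside $R$ and is hence also a reflection of order two, and since $(\rho s)s = \rho$ we obtain $\langle s, \rho s \rangle \supseteq \{\rho, s\}$, so that $H = D \cup Ds = \langle \rho, s \rangle = \langle s, \rho s \rangle$. Thus $H$ is a $2$-group generated by two elements of order two, and Theorem~\ref{l:order-two-dihedral} immediately identifies it as a dihedral $2$-group (this includes the order-four case $C_2 \times C_2$, which the paper's definition already counts as dihedral).

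The only real subtlety — and the point I would take care with — is the bookkeeping in the degenerate small cases. The step producing two order-two generators breaks down precisely when $D$ is trivial, since then $s$ and $\rho s$ coincide; but in that situation $H$ has order two and is already cyclic, so the conclusion still holds. I would also verify that the slogan "outside $R$ implies reflection implies order two" is applied only where intended, since the central rotation $(ab)^{2^{n-1}}$ has order two yet lies in $R$. That element causes no trouble: it belongs to the cyclic part $D$ and is never pressed into service as the reflection $s$.
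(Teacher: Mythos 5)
Your proof is correct, but it takes a genuinely different route from the paper's. The paper argues by induction on $n$: the base case $D(4)$ is checked directly, and the inductive step combines Proposition~\ref{p:dihedral-maximal-subgroups} (the maximal subgroups of $D(2^{k+1})$ are two copies of $D(2^k)$ and one copy of $C_{2^{k+1}}$) with the fact that every proper subgroup lies in some maximal subgroup. You instead give a direct structural argument: split on whether $H$ lies in the index-two rotation subgroup $R = \langle ab \rangle$; if so, $H$ is cyclic as a subgroup of a cyclic group, and if not, you exhibit two involutions $s$ and $\rho s$ generating $H$ and invoke Theorem~\ref{l:order-two-dihedral}. Each approach buys something. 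The paper's induction is very short because Proposition~\ref{p:dihedral-maximal-subgroups} is already on hand, but it conceals the structure of the subgroups and leans on an unexhibited computation for $D(4)$. Your proof needs no induction and no base case, and it yields more explicit information --- every non-cyclic subgroup is $\langle \rho, s\rangle$ with $\rho$ a rotation and $s$ a reflection, and its rotation part $H \cap \langle ab \rangle$ has index two --- which is precisely the kind of description the later results (Corollary~\ref{c:powerful-isomorphism-types}, Lemma~\ref{l:copies-of-C2-times-C2}) rely on; your care with the degenerate case $D = \{e\}$ and with the central involution $(ab)^{2^{n-1}}$ closes the only gaps such an argument could have. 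Two small points of bookkeeping: the fact that reflections have order two is Lemma~\ref{l:normal-form}(ii) rather than Corollary~\ref{c:elements-of-order-two} (the corollary is the converse characterization); and the claim that $H \cap R$ has index two in $H$ deserves one explicit line within the paper's toolkit --- restrict the homomorphism $\alpha_3$ from the proof of Proposition~\ref{p:dihedral-maximal-subgroups} (whose kernel is $\langle ab \rangle$) to $H$; it is surjective because $H \not\subseteq R$, so by the First Isomorphism Theorem fact quoted in the background section, its kernel $H \cap R$ has index two in $H$.
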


\begin{proof}
We will proceed by induction. For the base case $n = 2$, this fact can be explicitly computed for $D(4)$. Now assume the statement is true for some $k \geq 2$, and consider $D(2^{k+1})$. By Proposition~\ref{p:dihedral-maximal-subgroups}, the maximal subgroups of this group are either dihedral or cyclic. Then, by the induction hypothesis and the fact that every proper subgroup is contained in some maximal subgroup, the desired result follows. 
\end{proof}

\begin{corollary}\label{c:powerful-isomorphism-types} 
If a subgroup $H$ of $D(2^n)$ is powerful, then either $H\cong C_2\times C_2$ or $H\cong C_k$ where $k $ is a divisor of $2^{n-1}$. If $H \cong C_k$ for $k > 2$, then $H$ is a subgroup of $\langle ab \rangle$. 
\end{corollary}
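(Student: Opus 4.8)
The plan is to lean on Proposition~\ref{p:dihedral-or-cyclic}, which tells us that every subgroup $H \le D(2^n)$ is either cyclic or dihedral, and to treat these two cases separately. The guiding principle is that abelian groups are automatically powerful, while a genuinely nonabelian dihedral $2$-group (order at least $8$) is never powerful; thus the only powerful subgroups are the cyclic ones together with the abelian dihedral group $C_2 \times C_2$.

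First I would dispose of the cyclic case. If $H$ is cyclic then it is abelian, hence powerful, so nothing need be checked for powerfulness; what remains is to pin down the isomorphism type and the location of $H$. Suppose $\lvert H\rvert = k > 2$ and let $g$ generate $H$, so $g$ has order $k > 2$. By Corollary~\ref{c:elements-of-order-two}, every element of order two in $D(2^n)$ is either a reflection $(ab)^j a$ or the central rotation $(ab)^{2^{n-1}}$; since $g$ has order exceeding two it must be a non-central, nontrivial power of $ab$. Hence $H = \langle g\rangle \subseteq \langle ab\rangle$, and $\lvert H\rvert$ divides the order of $ab$ recorded in Lemma~\ref{l:normal-form}, namely $2^{n-1}$. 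This yields both $H \cong C_k$ with $k \mid 2^{n-1}$ and the containment $H \le \langle ab\rangle$ asserted for $k > 2$.

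Next I would handle the dihedral case. By definition a dihedral subgroup is isomorphic either to $C_2 \times C_2$, which is abelian and therefore powerful, or to $D(2^m)$ for some $m \ge 2$. The heart of the argument is showing that the latter is not powerful. Since powerfulness is an isomorphism invariant, it suffices to examine $D(2^m)$ itself. Writing $\rho = ab$ for its rotation generator, which has order at least $4$ when $m \ge 2$ by Lemma~\ref{l:ab-order-lemma}, the commutator relation $[a,b] = (ab)^2 = \rho^2$ together with Proposition~\ref{p:normal-closure-facts} identifies the commutator subgroup as $[D(2^m), D(2^m)] = \langle \rho^2\rangle$. On the other hand, every reflection squares to the identity, so its fourth power is trivial, while $(\rho^j)^4 = \rho^{4j}$; consequently $D(2^m)^4 = \langle \rho^4\rangle$. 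Because $\rho$ has order at least $4$, the element $\rho^2$ lies in $\langle\rho^2\rangle$ but not in $\langle\rho^4\rangle$, so $[D(2^m),D(2^m)] \not\subseteq D(2^m)^4$ and $D(2^m)$ fails to be powerful. Combining the two cases gives the corollary.

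I expect the main obstacle to be the explicit determination of the fourth-power subgroup $D(2^m)^4$ and the verification that the commutator $\rho^2$ escapes it. The commutator computation is routine, but care is needed to confirm that $\langle\rho^2\rangle$ is already normal, so that it equals the full commutator subgroup rather than merely containing the generating commutators, and that no reflection contributes a nontrivial fourth power. The order comparison $\lvert\rho^2\rvert > \lvert\rho^4\rvert$ then closes the case cleanly, including the smallest instance $m = 2$, where $D(2^m)^4$ is in fact trivial.
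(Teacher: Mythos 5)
Your proposal is correct and follows essentially the same route as the paper: the dihedral-or-cyclic dichotomy of Proposition~\ref{p:dihedral-or-cyclic} for the first statement and the normal form of Lemma~\ref{l:normal-form} (rotations are the only elements of order exceeding two, hence lie in $\langle ab\rangle$) for the second. The paper's proof consists of exactly those two citations, so your explicit verification that a nonabelian dihedral $2$-group fails to be powerful --- computing $[D(2^m),D(2^m)]=\langle \rho^2\rangle$ via Proposition~\ref{p:normal-closure-facts} and $D(2^m)^4=\langle \rho^4\rangle$, then comparing orders --- is precisely the detail the paper leaves implicit, and your treatment of it (including the normality of $\langle\rho^2\rangle$ and the base case $m=2$) is sound.
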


\begin{proof}
The first statement follows from Proposition~\ref{p:dihedral-or-cyclic}. The second statement follows from Lemma~\ref{l:normal-form}. 
\end{proof} 

\begin{proposition}\label{p:subgroup-cover-must-contain-maximal-cyclic}
Let $\mathcal{C} = \{H_1, \ldots, H_q \}$ be any subgroup cover of $D(2^n)$. Then there is some $i$ with $1 \leq i \leq q$ such that $H_i = \langle ab \rangle$.
\end{proposition}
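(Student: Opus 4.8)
The plan is to show that a single element, the rotation $ab$, already forces the subgroup $\langle ab \rangle$ into any cover, because $ab$ lies in no proper subgroup other than $\langle ab \rangle$ itself. The starting point is Lemma~\ref{l:ab-order-lemma}, which tells us that $ab$ has order $2^n$; consequently the cyclic subgroup $\langle ab \rangle$ it generates has order $2^n$.

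The heart of the argument is an order/index computation. The group $D(2^n)$ has order $2^{n+1}$, so by Lagrange's theorem every proper subgroup has order at most $2^n$. Now suppose $H$ is any proper subgroup that happens to contain $ab$. Then $H$ contains the whole cyclic group $\langle ab \rangle$, which forces $|H| \geq 2^n$; together with $|H| \leq 2^n$ this pins down $|H| = 2^n$ and hence $H = \langle ab \rangle$. In other words, among all proper subgroups of $D(2^n)$, the element $ab$ belongs to exactly one, namely $\langle ab \rangle$.

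To finish, I would invoke the covering hypothesis: since $\mathcal{C} = \{H_1, \ldots, H_q\}$ is a cover, its union is all of $D(2^n)$, so some member $H_i$ must contain the element $ab$. By the previous paragraph that $H_i$ can only be $\langle ab \rangle$, which is precisely the desired conclusion.

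I do not anticipate a genuine obstacle here, as the whole proof reduces to a Lagrange-theorem estimate. The only conceptual point worth flagging is the choice of which element to track: a generator of the \emph{maximal} cyclic subgroup is exactly what is needed, since its order equals the order of an index-two subgroup, and this coincidence is what makes the containing subgroup unique. Tracking a reflection or a lower-order rotation would not yield uniqueness, so identifying $ab$ as the right witness is the key observation.
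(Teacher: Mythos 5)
Your proof is correct and takes essentially the same approach as the paper's: the covering hypothesis forces some $H_i$ to contain $ab$, hence to contain $\langle ab \rangle$, and since no proper subgroup of $D(2^n)$ can properly contain $\langle ab \rangle$, that $H_i$ must equal $\langle ab \rangle$. The only cosmetic difference is that you re-derive this last uniqueness step by a Lagrange order count (order $2^n$ inside a group of order $2^{n+1}$), whereas the paper simply cites the maximality of $\langle ab \rangle$ established in Proposition~\ref{p:dihedral-maximal-subgroups} --- the same fact, justified the same way.
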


\begin{proof}
By definition of subgroup cover, there must be some $i \in \{1,\ldots, q\}$ such that $ab\in H_i$. Since $H_i$ contains $ab$, it follows that $H_ i$ contains the subgroup $\langle ab \rangle$. Since $\langle ab \rangle$ is maximal and $H_i$ must be a proper subgroup of $G$, it follows that $H_i = \langle ab \rangle$. 
\end{proof}

\begin{lemma}\label{l:copies-of-C2-times-C2}
Let $H$ be a subgroup of a dihedral $2$-group. Then $H$ is isomorphic to $C_2 \times C_2$ if and only if $H = \langle (ab)^sa, (ab)^ta \rangle$, where $t \neq s$ and $t + s = 2^{n-1}$.
\end{lemma}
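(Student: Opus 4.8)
The plan is to prove both directions by reducing everything to a single computation: the product of two reflections written in the normal form of Lemma~\ref{l:normal-form}. The central tool is the identity $a(ab)^t = (ab)^{-t}a$, which follows from Lemma~\ref{l:ab-order-lemma} (conjugation by $a$ inverts the rotation $ab$, since $a(ab)a = ba = (ab)^{-1}$). Using it, for two reflections $x = (ab)^s a$ and $y = (ab)^t a$ I would compute
\[
xy = (ab)^s a (ab)^t a = (ab)^s (ab)^{-t} a^2 = (ab)^{s-t},
\]
and symmetrically $yx = (ab)^{t-s}$. Both $x$ and $y$ already have order two by Lemma~\ref{l:normal-form}(ii), so the only remaining questions are whether $x$ and $y$ commute and whether their product is again an involution; this is precisely where the arithmetic relationship between $s$ and $t$ enters.

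For the implication that starts from $H = \langle (ab)^s a, (ab)^t a \rangle$ with $s \neq t$, the computation above shows that $xy$ is the rotation $(ab)^{s-t}$. When the exponents are related so that $s - t \equiv 2^{n-1} \pmod{2^n}$, this product is the central half-turn $(ab)^{2^{n-1}}$, which has order two by Corollary~\ref{c:elements-of-order-two} and is central by Lemma~\ref{l:normal-form}(iv); the same relation gives $yx = (ab)^{t-s} = (ab)^{2^{n-1}} = xy$, so $x$ and $y$ commute. Since $s \neq t$, the four elements $e, x, y, xy$ are distinct by the uniqueness in Lemma~\ref{l:normal-form}(i), so $H$ is a group of order four generated by two commuting involutions, hence $H \cong C_2 \times C_2$ by Corollary~\ref{c:C2-C2-fact}.

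For the converse, suppose $H \leq D(2^n)$ with $H \cong C_2 \times C_2$, so that $H$ has exactly three involutions. By Corollary~\ref{c:elements-of-order-two} every involution of $D(2^n)$ is either a reflection $(ab)^j a$ or the half-turn $(ab)^{2^{n-1}}$, and the latter is the \emph{unique} involution inside the cyclic rotation subgroup $\langle ab \rangle$. The key structural observation is that the three involutions of $H$ cannot all be reflections: the product of two reflections is a rotation, and in $C_2 \times C_2$ that product is again a nonidentity element, hence a rotation-involution, which must be $(ab)^{2^{n-1}}$. Therefore exactly two of the involutions are reflections, say $(ab)^s a$ and $(ab)^t a$ with $s \neq t$, and the third is the half-turn; these two reflections generate $H$, and by the product computation $(ab)^{s-t} = (ab)^{2^{n-1}}$, which pins down the relationship between the exponents $s$ and $t$.

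The step I expect to be the main obstacle is the exponent bookkeeping: translating the geometric fact ``$xy$ is the central involution'' into the exact arithmetic condition on $s$ and $t$, namely that they differ by $2^{n-1}$ modulo $2^n$ (equivalently $s \equiv t \pmod{2^{n-1}}$ with $s \neq t$), and checking that under the normalization $0 \le s,t < 2^n$ this produces exactly $2^{n-1}$ distinct copies of $C_2 \times C_2$, each containing the central half-turn. I would take care to state this relationship in the form used in the lemma and to confirm it is symmetric in $s$ and $t$, since swapping the two generators replaces $(ab)^{s-t}$ by $(ab)^{t-s}$, and $(ab)^{-2^{n-1}} = (ab)^{2^{n-1}}$. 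Every other ingredient is a direct appeal to the normal form (Lemma~\ref{l:normal-form}), the involution classification (Corollary~\ref{c:elements-of-order-two}), and the recognition criterion for $C_2 \times C_2$ (Corollary~\ref{c:C2-C2-fact}).
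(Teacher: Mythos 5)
Your computation is the right one, and it follows exactly the strategy of the paper's own proof (compute the product of two reflections, recognize $C_2 \times C_2$ via Corollary~\ref{c:C2-C2-fact}, classify involutions via Corollary~\ref{c:elements-of-order-two}). However, the condition your computation produces, namely $s - t \equiv 2^{n-1} \pmod{2^n}$ (equivalently $s \equiv t \pmod{2^{n-1}}$ with $s \neq t$), is \emph{not} equivalent to the printed condition $s + t = 2^{n-1}$, and the step you flagged as the ``main obstacle'' --- restating your condition ``in the form used in the lemma'' --- is precisely the step that cannot be carried out: the lemma as printed is false, and your version is the correct statement. Concretely, in $D(8)$ (so $n = 3$, $2^{n-1} = 4$, and $ab$ has order $8$), take $s = 1$, $t = 3$: then $s + t = 2^{n-1}$, yet $xy = (ab)^{s-t} = (ab)^{-2}$ has order $4$, so $\langle (ab)a, (ab)^3a \rangle$ is dihedral of order $8$, not $C_2 \times C_2$. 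Conversely, $\langle (ab)a, (ab)^5a \rangle$ is isomorphic to $C_2 \times C_2$, although $1 + 5 \neq 4$, so the ``only if'' direction of the printed statement fails as well.

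The source of the discrepancy is an algebra error in the paper's proof, not in yours. In the forward direction the paper writes $xy = (ab)^s a\, a\, (ba)^{2^{n-1}-s}$ and then replaces $(ba)^{2^{n-1}-s}$ by $(ab)^{2^{n-1}-s}$; since $(ba)^k = (ab)^{-k}$, that replacement is legitimate only when $k \equiv 0 \pmod{2^{n-1}}$. Carried out correctly, the hypothesis $s + t = 2^{n-1}$ yields $xy = (ab)^{2s - 2^{n-1}}$, which is the central involution only for $s \in \{0, 2^{n-1}\}$. Tellingly, the paper's own Case~2 of the converse direction computes the product of two reflections correctly, arrives at $s - r = 2^{n-1}$ --- your condition --- and then asserts that ``the result follows,'' which it does not under the printed hypothesis. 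Your corrected condition is also the one the rest of the paper actually needs: it gives exactly $2^{n-1}$ subgroups isomorphic to $C_2 \times C_2$, each containing $(ab)^{2^{n-1}}$, as claimed in Remark~\ref{r:count-and-intersection-C2}, and the covering subgroups in Proposition~\ref{p:upper-bound-covering-number} and Theorem~\ref{t:main-result} should accordingly be $\langle (ab)^r a, (ab)^{r + 2^{n-1}}a \rangle$ for $0 \leq r < 2^{n-1}$. So finish your write-up as a correction of the statement rather than trying to force your condition into the printed form.
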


\begin{proof}
Let $t$ and $s$ be distinct positive integers $s + t = 2^{n-1}$. We let $x = (ab)^sa$ and $y = (ab)^ta$, and we let $H$ be a group generated by $x$ and $y$. We want to show that $\langle x , y \rangle \cong C_2 \times C_2$. First, notice that $x$ and $y$ each have order two, by Lemma~\ref{l:normal-form}. We calculate that
\begin{align*}
xy &= (ab)^sa(ab)^ta \\
&= (ab)^s a (ab)^{2^{n-1} - s} a \\
&= (ab)^s a a (ba)^{2^{n-1} - s} \\
&= (ab)^s a a (ab)^{2^{n-1} - s} \\
&= (ab)^s (ab)^{2^{n-1} - s} \\
&= (ab)^{2^{n-1}}.
\end{align*}

A similar calculation establishes that $yx = (ba)^{2^{n-1}}$, and since $(ab)^{2^{n-1}} = (ba)^{2^{n-1}}$ by Lemma~\ref{l:ab-order-lemma}, it follows that $xy = yx$. Hence $\langle x, y \rangle \cong C_2 \times C_2$ by Lemma~\ref{c:C2-C2-fact}. 

For the other direction, we suppose that $H$ is a subgroup of $D(2^n)$ such that $H$ is isomorphic to $C_2 \times C_2$. Note that $H$ must contain an element of the form $(ab)^sa$, otherwise $H$ would be contained in the cyclic group $\langle ab \rangle$. Then $H$ must also contain another element $g$ of order two. We consider two cases for $g$, following Corollary~\ref{c:elements-of-order-two}. 

\textbf{Case 1:} $g = (ab)^{2^{n-1}}$. From Lemma~\ref{l:ab-order-lemma}, we know that $(ab)^s a = a (ab)^{-s}$ and $(ab)^s a$ commutes with $(ab)^{2^{n-1}}$, giving us \begin{align*}
(ab)^{2^{n-1}}(ab)^s a &= (ab)^{2^{n-1}} a (ab)^{-s} \\
&= (ab)^{2^{n-1}} (ab)^{-s} a \\
&= (ab)^{2^{n-1} - s} a. \\
\end{align*}
Letting $t = 2^{n-1} - s$, we have shown that in this case, $H = \langle (ab)^s a, (ab)^t a \rangle$, where $s \neq t$ and $s + t = 2^{n-1}$.

\textbf{Case 2:} $g = (ab)^r a $ for some $r$. Without loss of generality, assume $s > r$. Then $H$ also contains $(ab)^s a g$, which is equal to $(ab)^s a (ab)^r a$, which simplifies to $(ab)^{s-r}$. By Lemma~\ref{l:normal-form}, if $s-r \neq 2^{n-1}$, then $(ab)^{s-r}$ generates a cyclic subgroup of order larger than two, contradicting our assumption that $H \cong C_2 \times C_2$. Thus it must be the case that $s-r = 2^{n-1}$, and the result follows. 
\end{proof}

\begin{corollary}
Let $g \in D(2^n)$ such that $g = (ab)^ja$ for some $0 \leq j \leq 2^{n-1}$. If $H$ is a powerful subgroup of $D(2^n)$ that contains $g$, then $H$ is either trivial, isomorphic to $C_2$, or isomorphic to $C_2 \times C_2$. 
\end{corollary}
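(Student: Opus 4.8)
The plan is to reduce immediately to the classification of powerful subgroups already recorded in Corollary~\ref{c:powerful-isomorphism-types}, and then to use the purely geometric fact that $g$ is a reflection to eliminate every cyclic possibility except $C_2$.

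First I would observe that, since $g = (ab)^j a$, part (ii) of Lemma~\ref{l:normal-form} tells us $g$ has order two; in particular $g \neq e$, so $H$ cannot be trivial, and the ``trivial'' option in the statement is vacuous. The essential point is that $g$ is a reflection rather than a rotation, and the rotation subgroup of $D(2^n)$ is precisely $\langle ab \rangle$. Hence $g \notin \langle ab \rangle$. This single observation is the crux of the whole argument.

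Next I would invoke Corollary~\ref{c:powerful-isomorphism-types}, which asserts that a powerful subgroup $H$ of $D(2^n)$ is either isomorphic to $C_2 \times C_2$ or cyclic of order $k$ dividing $2^{n-1}$, and that whenever such a cyclic $H$ has order $k > 2$ it must lie inside $\langle ab \rangle$. If $H$ were cyclic of order $k > 2$, then $H \leq \langle ab \rangle$ would force $g \in \langle ab \rangle$, contradicting the previous step. Thus a cyclic $H$ is forced to have $k \leq 2$; since $g \in H$ has order two, this leaves only $H = \langle g \rangle \cong C_2$. The only remaining possibility from the corollary is $H \cong C_2 \times C_2$, which completes the case analysis.

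There is essentially no hard computational step here; the entire argument is a short deduction chaining together results established earlier in the section. The one point demanding care is the justification that $g \notin \langle ab \rangle$ — that is, that a reflection can never coincide with a rotation — which rests on the identification of $\langle ab \rangle$ as the full rotation subgroup of $D(2^n)$ together with the reflection/rotation dichotomy recorded in the opening theorem on $D(2^n)$.
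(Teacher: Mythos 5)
Your proof is correct and takes exactly the approach the paper intends: the paper states this corollary without an explicit proof, as an immediate consequence of Corollary~\ref{c:powerful-isomorphism-types}, and your chain of deductions --- $g=(ab)^ja$ has order two by Lemma~\ref{l:normal-form}, $g$ is a reflection and hence lies outside the rotation subgroup $\langle ab\rangle$, so the cyclic case of order $k>2$ in Corollary~\ref{c:powerful-isomorphism-types} is excluded --- is precisely the implicit argument, spelled out. Nothing further is needed.
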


\begin{remark}\label{r:count-and-intersection-C2}
From Lemma~\ref{l:copies-of-C2-times-C2}, it follows that there are exactly $2^{n-1}$ subgroups of $D(2^n)$ that are isomorphic to $C_2 \times C_2$ -- one for each pair of numbers $s,t$ with $0 \leq s, t \leq 2^{n-1}$ and $s \neq t$ such that $s+t = 2^n$. It also follows from this Lemma that if $H_i$ and $H_j$ are distinct subgroups isomorphic to $C_2 \times C_2$, then $H_i \cap H_j = \langle (ab)^{2^{n-1}} \rangle $. 
\end{remark}

\subsection{Calculation of The Powerful Covering Numbers}

\begin{proposition}\label{p:upper-bound-covering-number} 
There exists a powerful cover of $D(2^n)$ with $2^{n-1} + 1$ subgroups. 
\end{proposition}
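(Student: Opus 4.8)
The plan is to construct an explicit powerful cover of $D(2^n)$ of the required size, rather than to argue abstractly. The elements of $D(2^n)$ divide into the $2^n$ rotations, which are exactly the powers $(ab)^j$ with $0 \le j < 2^n$, and the $2^n$ reflections, which by Lemma~\ref{l:normal-form} are exactly the elements $(ab)^j a$ with $0 \le j < 2^n$. I would cover the rotations with one subgroup and the reflections with $2^{n-1}$ subgroups, for a total of $2^{n-1} + 1$.

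First I would handle the rotations in a single step: the cyclic subgroup $\langle ab\rangle \cong C_{2^n}$ is abelian, hence powerful, and it is proper (indeed maximal of index $2$) by Proposition~\ref{p:dihedral-maximal-subgroups}. It contains every rotation, so this one subgroup accounts for all $2^n$ rotations at once.

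Next I would cover the reflections using Klein-four subgroups. By Lemma~\ref{l:copies-of-C2-times-C2}, a subgroup generated by two distinct reflections $(ab)^s a$ and $(ab)^t a$ is isomorphic to $C_2 \times C_2$ precisely when the indices are \emph{antipodal}, that is, when $s$ and $t$ differ by $2^{n-1}$; such a subgroup is abelian, hence powerful and proper, and its only reflections are the two generators. Pairing each $j \in \{0, 1, \ldots, 2^{n-1}-1\}$ with $j + 2^{n-1}$ yields the subgroups $K_j = \langle (ab)^j a, (ab)^{j+2^{n-1}} a\rangle$, of which there are exactly $2^{n-1}$. Since the pairs $\{j, j+2^{n-1}\}$ partition $\{0, 1, \ldots, 2^n - 1\}$, every reflection $(ab)^j a$ lies in exactly one $K_j$, so these $2^{n-1}$ subgroups cover all of the reflections.

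Finally I would assemble the collection $\{\langle ab\rangle\} \cup \{K_j : 0 \le j < 2^{n-1}\}$ and check it is a powerful cover: every member is proper and powerful (cyclic or $C_2 \times C_2$, both abelian), and every group element is either a rotation, covered by $\langle ab\rangle$, or a reflection, covered by some $K_j$. Counting gives $1 + 2^{n-1}$ subgroups, as claimed. I do not expect a genuine obstacle here, since the proposition is purely a construction; the only point requiring care is the bookkeeping of the pairing, making sure each of the $2^n$ reflections is hit while using exactly $2^{n-1}$ Klein-four subgroups rather than more. The harder, complementary direction, namely that no smaller powerful cover exists, is a matching lower bound that I would expect to be established by a separate argument.
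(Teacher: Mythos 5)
Your construction is correct, and in spirit it is exactly the paper's: the maximal cyclic subgroup $\langle ab \rangle$ absorbs all $2^n$ rotations, and $2^{n-1}$ Klein four subgroups absorb the $2^n$ reflections in pairs. The substantive difference is the pairing, and there your version is the right one while the paper's, as literally written, is not. The paper takes $H_{r+1} = \langle (ab)^{r}a, (ab)^{2^n - r}a \rangle$ for $1 \leq r \leq 2^{n-1}$. Since the product of two reflections satisfies $(ab)^s a \cdot (ab)^t a = (ab)^{s-t}$, such a pair generates $C_2 \times C_2$ only when $s - t \equiv 2^{n-1} \pmod{2^n}$ (perpendicular axes); for the paper's pairs $s - t \equiv 2r \pmod{2^n}$, so for generic $r$ the subgroup $H_{r+1}$ is dihedral, hence not powerful, and moreover the reflection $a = (ab)^0 a$ lies in none of the $H_i$ at all, so the paper's family is not even a cover. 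Your antipodal pairing $K_j = \langle (ab)^j a, (ab)^{j+2^{n-1}}a \rangle$, $0 \leq j < 2^{n-1}$, avoids both defects: each product of generators is the central involution $(ab)^{2^{n-1}}$, so each $K_j$ really is Klein four, and the pairs $\{j, j+2^{n-1}\}$ partition all $2^n$ reflection indices. One caution: you attribute the antipodal criterion to Lemma~\ref{l:copies-of-C2-times-C2}, but that lemma as printed states the condition $s + t = 2^{n-1}$; your reading is the mathematically correct one (and is what the lemma's Case~1 actually yields, since the involution paired with $(ab)^s a$ by the central element is $(ab)^{2^{n-1}+s}a$), so you are implicitly using a corrected form of the lemma rather than its literal statement.
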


\begin{proof}
Let $H_1 = \langle ab \rangle$, and for $1 \leq r \leq 2^{n-1}$, let $H_{r+1} = \langle (ab)^{r}a, (ab)^{2^n - r}a \rangle$. There are $2^{n-1}+1$ subgroups, each $H_i$ is abelian, and every element of $D(2^n)$ is contained in some $H_i$.  
\end{proof}

\begin{theorem}\label{t:main-result}
The powerful covering number of $D(2^n)$ is $2^{n-1}+1$.
\end{theorem}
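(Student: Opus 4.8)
The plan is to prove the two inequalities $\sigma_P(D(2^n)) \leq 2^{n-1}+1$ and $\sigma_P(D(2^n)) \geq 2^{n-1}+1$ separately. The upper bound is already furnished by Proposition~\ref{p:upper-bound-covering-number}, so the entire content of the argument lies in the lower bound: I must show that every powerful cover of $D(2^n)$ uses at least $2^{n-1}+1$ subgroups. The organizing idea is to track how the $2^n$ reflections (the elements of the form $(ab)^j a$, which by Corollary~\ref{c:elements-of-order-two} are precisely the involutions other than the central element $(ab)^{2^{n-1}}$) are distributed among the members of a cover.

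The first step is to observe that any powerful cover is in particular a subgroup cover, so by Proposition~\ref{p:subgroup-cover-must-contain-maximal-cyclic} it must contain the maximal cyclic subgroup $\langle ab\rangle$. Since $\langle ab\rangle$ consists entirely of rotations (powers of $ab$), it contains none of the $2^n$ reflections. Consequently every reflection must be covered by one of the \emph{other} members of the cover, and those members are genuinely distinct from $\langle ab\rangle$.

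The crux is the following bound: every powerful subgroup of $D(2^n)$ contains at most two reflections. To establish it I would invoke Corollary~\ref{c:powerful-isomorphism-types}, which says a powerful subgroup is either cyclic or isomorphic to $C_2\times C_2$. A cyclic group contains at most one involution, hence at most one reflection. A subgroup $H\cong C_2\times C_2$ has exactly three involutions; by Lemma~\ref{l:copies-of-C2-times-C2} it has the form $\langle (ab)^s a, (ab)^t a\rangle$, and the computation there shows its third involution is the product $(ab)^s a\,(ab)^t a = (ab)^{2^{n-1}}$, which is the central rotation and not a reflection. Hence $H$ contains exactly two reflections. Either way the bound of two holds.

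Finally I would combine these observations into the counting estimate. Writing a minimal powerful cover as $\{\langle ab\rangle, H_2,\ldots,H_q\}$, the subgroups $H_2,\ldots,H_q$ must together cover all $2^n$ reflections, and each covers at most two of them, so $2^n \leq 2(q-1)$, whence $q \geq 2^{n-1}+1$. Together with the upper bound this yields $\sigma_P(D(2^n)) = 2^{n-1}+1$. I expect the main obstacle to be the reflection-counting bound for the $C_2\times C_2$ subgroups, specifically verifying that the third involution is always the central rotation rather than a third reflection; but this is exactly what the product computation in Lemma~\ref{l:copies-of-C2-times-C2} supplies, so the argument reduces to assembling results already in hand.
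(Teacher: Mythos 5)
Your proposal is correct and takes essentially the same approach as the paper: force $\langle ab\rangle$ into the cover via Proposition~\ref{p:subgroup-cover-must-contain-maximal-cyclic}, use Corollary~\ref{c:powerful-isomorphism-types} and Lemma~\ref{l:copies-of-C2-times-C2} to bound the contribution of every other member by two, and conclude by counting. Your bookkeeping (counting the $2^n$ reflections rather than total elements) is a minor variant that even has the small advantage of not requiring the minimality/redundancy step the paper uses to exclude cyclic subgroups of order greater than two.
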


\begin{proof}
By Proposition~\ref{p:upper-bound-covering-number}, we know that the powerful covering number of $D(2^n)$ is at most $2^{n-1} + 1$. Now we show that $D(2^n)$ can not be covered by fewer than $2^{n-1} + 1$ powerful subgroups. Let $\mathcal{C} = \{ H_1, \ldots, H_q \}$ be a powerful cover of $D(2^n)$. Appealing to Proposition~\ref{p:subgroup-cover-must-contain-maximal-cyclic} and re-indexing if necessary, we may assume that $H_1 = \langle ab \rangle$, so $|H_1| = 2^n$. Now we claim that for each $i$ with $2 \leq i \leq q$, the subgroup $H_i$ is isomorphic to to either $C_2$ or $C_2 \times C_2$.  This follows from Proposition~\ref{c:powerful-isomorphism-types}, and from the fact that if $H_i \cong C_k$ for some $k > 2$, we would have $H_i \subseteq \langle ab \rangle$. This would make the covering redundant, and hence not minimal. 
From Remark~\ref{r:count-and-intersection-C2} each $H_i$ that is isomorphic to $C_2 \times C_2$ contains $e$ and $(ab)^{2^{n-1}}$, which are already in $H_1$, so each $H_i$ that is isomorphic to $C_2 \times C_2$ contributes two new elements, while each $H_i$ that is isomorphic to $C_2$ contributes one new element.  This means that
\[
\left| \bigcup_{i=1}^q H_i \right| \leq |H_1| + 2(q -1) = 2^{n} + 2(q-1).
\]
In other words, the subgroups $H_2, \ldots, H_q$ can contain at most $2(q-1)$ elements not contained in $H_1$.
If $q < 2^{n-1}$, we would then have \[
\left| \bigcup_{i=1}^q H_i \right| < 2^{n} + 2(2^{n-1}) = 2^{n+1},
\] meaning that this collection of subgroups could not be a cover for the $2^{n+1}$ elements of $D(2^n)$. Thus, any cover of $D(2^n)$ by powerful subgroups must contain at least $2^{n-1} + 1$ powerful subgroups. This completes the proof. 
\end{proof} 

\begin{corollary}
For $n \geq 2$, $\sigma_P(D(2^n)) = \sigma_A(D(2^n))$.
\end{corollary}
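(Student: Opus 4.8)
The plan is to prove the two inequalities $\sigma_P(D(2^n)) \leq \sigma_A(D(2^n))$ and $\sigma_A(D(2^n)) \leq \sigma_P(D(2^n))$ separately. One direction is free: Lemma~\ref{l:powerful-abelian-covering-relationship} already records that $\sigma(G) \leq \sigma_P(G) \leq \sigma_A(G)$ for every noncyclic $2$-group, so $\sigma_P(D(2^n)) \leq \sigma_A(D(2^n))$ holds with no further argument. All the content is therefore in bounding $\sigma_A$ from above.

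For that direction, the key observation is that the powerful cover exhibited in Proposition~\ref{p:upper-bound-covering-number} is in fact an \emph{abelian} cover. Indeed, the first subgroup $H_1 = \langle ab \rangle$ is cyclic and hence abelian, while by Lemma~\ref{l:copies-of-C2-times-C2} each of the remaining subgroups $H_{r+1} = \langle (ab)^r a, (ab)^{2^n - r} a \rangle$ is isomorphic to $C_2 \times C_2$ and so is abelian as well. Thus the very same collection of $2^{n-1} + 1$ subgroups simultaneously witnesses $\sigma_A(D(2^n)) \leq 2^{n-1} + 1$.

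I would then combine this with Theorem~\ref{t:main-result}, which gives $\sigma_P(D(2^n)) = 2^{n-1} + 1$. Feeding both facts into the inequality chain yields $2^{n-1} + 1 = \sigma_P(D(2^n)) \leq \sigma_A(D(2^n)) \leq 2^{n-1} + 1$, forcing equality throughout and hence $\sigma_A(D(2^n)) = \sigma_P(D(2^n))$. I do not expect any genuine obstacle, since the hard work — the exact value of $\sigma_P$ and the explicit minimal cover — is already complete; the only point requiring care is confirming that the subgroups used in the upper-bound construction are abelian, and this is supplied directly by Lemma~\ref{l:copies-of-C2-times-C2}.
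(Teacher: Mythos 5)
Your proof is correct, but it reaches the key inequality $\sigma_A(D(2^n)) \leq \sigma_P(D(2^n))$ by a different route than the paper. The paper's one-line proof combines Theorem~\ref{t:main-result} with Corollary~\ref{c:powerful-isomorphism-types}: by that corollary, \emph{every} powerful subgroup of $D(2^n)$ is either cyclic or isomorphic to $C_2 \times C_2$, hence abelian, so every powerful cover of $D(2^n)$ is automatically an abelian cover and $\sigma_A(D(2^n)) \leq \sigma_P(D(2^n))$ follows with no reference to any particular cover or to the value $2^{n-1}+1$. You instead bound $\sigma_A$ by exhibiting one specific abelian cover --- the one from Proposition~\ref{p:upper-bound-covering-number} --- and then invoke the exact value $\sigma_P(D(2^n)) = 2^{n-1}+1$ from Theorem~\ref{t:main-result} to squeeze the two quantities together. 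Both arguments are valid, and both use Lemma~\ref{l:powerful-abelian-covering-relationship} for the easy direction. What the paper's structural argument buys is robustness: it shows that for these groups the collections of powerful covers and abelian covers literally coincide, so the equality $\sigma_P = \sigma_A$ would hold even if the common value were unknown. Your argument is more constructive but is tied to the computation in the main theorem.

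One caveat about your appeal to Lemma~\ref{l:copies-of-C2-times-C2}: as stated, that lemma concerns generators $(ab)^s a$ and $(ab)^t a$ with $s + t = 2^{n-1}$, whereas the subgroups $H_{r+1} = \langle (ab)^r a, (ab)^{2^n - r} a \rangle$ of Proposition~\ref{p:upper-bound-covering-number} have exponent sum $2^n$, so the lemma does not literally apply to them. This mismatch is an indexing inconsistency internal to the paper (the proposition itself asserts that each $H_i$ is abelian), so it does not reflect a flaw in your overall strategy, but it is worth noting that the paper's route through Corollary~\ref{c:powerful-isomorphism-types} sidesteps this issue entirely, since it never needs to identify which subgroups of the explicit cover are copies of $C_2 \times C_2$.
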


\begin{proof}
This follows from Theorem~\ref{t:main-result} and Corollary~\ref{c:powerful-isomorphism-types}.
\end{proof}

\begin{remark}
It is well-known that for any $n \geq 3$, there is a surjective homomorphism from $D(2^{n+1})$ to $D(2^n)$. Theorem~\ref{t:main-result} shows that $\sigma_P(D(2^{n+1})) > \sigma_P(D(2^n))$. This offers an important contrast between $\sigma_P$ and $\sigma$, as it shows that Theorem~\ref{t:homomorphic-cover-numbers} does not adapt to the case of powerful covering numbers. 
\end{remark}

\section{Conclusion}

In this section, we raise questions and provide directions for future work. 

Recall that from the result of Bryce and Serena in Theorem~\ref{t:minimal-covers-by-abelian}, the groups for which $\sigma(G) = \sigma_A(G) = \sigma_P(G)$ have been completely characterized. On the other hand, our results in the previous section establish an infinite family of examples for which $\sigma_A(G) = \sigma_P(G)$, but $\sigma(G) \neq \sigma_P(G)$. It would be interesting to establish the relationship between these properties for further classes of groups.

\textbf{Question 1.} For which $p$-groups is it true that $\sigma_P(G) = \sigma_A(G)$?

If $G$ is a $p$-group of order $p^n$ and nilpotence class $c$, the \textit{coclass} of $G$ is defined as $n-c$. For each value of $n \geq 4$, there are three $2$-groups having order order $2^n$ and  coclass equal to 1: the dihedral group of order $2^n$, the \textit{quasi-dihedral group} of order $2^n$, and the \textit{generalized quartenion group} of order $2^n$.  We have explicitly calculated the powerful covering number for one of them. Investigations by the second and third authors in \texttt{GAP}~\cite{GAP} suggest that the remaining $2$-groups of coclass equal to 1 also have a powerful covering number of $2^{n-1} + 1$, leading us to the following conjecture. 

\textbf{Conjecture 1.} If $G$ is a $2$-group of coclass 1 such that $|G| = 2^{n+1}$, then $\sigma_P(G) = 2^{n-1} + 1$.

Among the examples of $2$-groups that we have considered, this is the largest powerful covering number that we have found, so we also make the following conjecture.

\textbf{Conjecture 2.} If $G$ is a $2$-group of order $2^{n+1}$, then $\sigma_P(G) \leq 2^{n-1} + 1$.

 One would hope that if $H$ is a subgroup of $G$, then $\sigma_P(H) \leq \sigma_P(G)$. The naive attempt at proving this result would proceed as follows. If $K_1, \ldots, K_n$ is a minimal powerful cover of $G$, then $(H \cap K_1), \ldots, (H \cap K_n)$ is a powerful cover of $H$, so $\sigma_P(H) \leq n$. There are two potential problems with this argument: if $H$ is equal to one of the $K_i$, then $H \cap K_i$ is not a proper subgroup. Also, even if $H$ is a powerful subgroup of $G$, it may not be true that $H \cap K_i$ is a powerful subgroup of $H$. 
 
 However, computational investigations by the second and third authors in \texttt{GAP}~\cite{GAP} have searched all $p$-groups of order not exceeding 81, and have not found any examples where a proper subgroup has a larger powerful covering number than the group containing it.  

\textbf{Question 2.} What are the properties of $\sigma_P$ with respect to subgroup inclusion? Is there an example of a group $G$ with a subgroup $H$ such that $\sigma_P(H) > \sigma_P(G)$? 

Another natural extension of this research is to study minimal covers of $p$-groups by \textit{powerfully embedded} subgroups. If $G$ is a finite $p$-group, a normal subgroup $N$ of $G$ is said to be powerfully embedded in $G$ if $[N,G] \subseteq N^p$ when $p$ is odd, or $[N,G] \subseteq N^4$ when $p=2$. Every powerfully embedded subgroup is powerful, yet the converse is not true. Using $\sigma_{PE}(G)$ to denote the size of a minimal powerfully embedded cover, this implies $\sigma_P(G) \leq \sigma_{PE}(G)$. One can show that a powerfully embedded cover is not guaranteed to exist. For example, it is not possible to cover the dihedral group on 32 elements using powerfully embedded subgroups. 

\textbf{Question 3.} For which $p$-groups does a powerfully embedded cover exist? What are the $p$-groups for which $\sigma_P(G) = \sigma_{PE}(G)$? When such a cover exists, what is the relationship between $\sigma_A(G)$ and $\sigma_{PE}(G)$?

We aim to continue work to find answers to the questions posed in this section.

\end{document}